\title[Sparse control with intermediate constraints]{Necessary conditions for sparse optimal control problems with intermediate constraints}
\author[Y. Kumar]{Yogesh Kumar}
\address{%
	ideaForge Technology Pvt. Ltd. \\ Mahape, Navi Mumbai 400710, India.
}
\author[S. Srikant, D. Chatterjee]{Sukumar Srikant and Debasish Chatterjee}
\address{%
	Systems \& Control Engineering\\ Indian Institute of Technology Bombay, Powai\\ Mumbai 400076, India.\\\href{http://www.sc.iitb.ac.in/~srikant}{http://www.sc.iitb.ac.in/\textasciitilde{}srikant}\\\href{http://www.sc.iitb.ac.in/~srikant}{http://www.sc.iitb.ac.in/\textasciitilde{}chatterjee}
}
\author[M. Nagahara]{Masaaki Nagaraha}
\address{%
	Institute of Environmental Science and Technology\\ The University of Kitakyushu\\ Fukuoka 808-0135, Japan.\\\href{https://nagahara-masaaki.github.io}{https://nagahara-masaaki.github.io}
}
\thanks{Emails: (YK) \textsf{02yogesh16@gmail.com}; (SS) \textsf{srikant.sukumar@iitb.ac.in}; (DC) \textsf{dchatter@iitb.ac.in}; (MN) \textsf{nagahara@ieee.org}}
\keywords{optimal control; sparse control; intermediate constraints} 
\date{\DTMnow}
\begin{document}
\begin{abstract}
	This article treats optimal sparse control problems with multiple constraints defined at intermediate points of the time domain. For such problems with intermediate constraints, we first establish a new Pontryagin maximum principle that provides first order necessary conditions for optimality in such problems. Then we announce and employ a new numerical algorithm to arrive at, in a computationally tractable fashion, optimal state-action trajectories from the necessary conditions given by our maximum principle. Several detailed illustrative examples are included.
\end{abstract}

   \maketitle
   
\section{Introduction}
\label{s:intro}

In this article we study a finite horizon sparse control problem with constraints on the states and control at intermediate times in addition to constraints on the states at the boundary times. There are two key ingredients in the aforementioned control problem. The first ingredient is the objective function that promotes sparsity --- it consists of an \(\Lp{0}\)-cost on the controls to design maximally sparse-in-time controllers. Such controllers are increasingly gaining prominence today, and several advantages of sparse controls or ``maximum hands-off'' controls in applications have been pointed out in \cite{ref:NagQueNes-16}. The second ingredient is the presence of intermediate constraints. The standard situation consists of boundary constraints on the pair \(\bigl( x(\bar t), x(\hat t) \bigr)\) if \([\bar t, \hat t]\) is our given interval of time. In the article at hand we define a finite sequence \(\bar t \teL t_0 < t_1 < \cdots < t_\nu \Let \hat t\) of time instants, at each of which we impose constraints on the states of the underlying system. Our results apply to a rather general class of optimal control problems that includes the design of maximally sparse controls as a special case; consequently, there are a rich set of applications, some of which are described below.

Sparse controls is an emerging area in control theory with a diverse range of applications; see, e.g., \cite{ref:NagQueNes-16, ref:ChaNagQueRao-16, ref:SriCha-16} and the references therein for a host of application areas. In particular, in networked control, sparsity is used for efficient compression and representation of control data in the form of \textit{compressive sampling} techniques, and the objective is to send such data efficiently through rate-limited communication channels such as wireless networks or the internet \cite{ref:NagMatHay-12}. In \textit{maximum hands off control} of dynamical systems, sparsity helps reduce the activation time of actuators which improves the efficiency of electric engines, in automotive industry, railway vehicles, etc.\ \cite{ref:ChaNagQueRao-16}. Control theoretic splines \cite{ref:SunEgeMar-00} that are modified to include sparsity for noise reduction and sparse representation have been proposed in \cite{ref:NagMar-14}. Since the data at multiple intermediate points are available, the target is to find an optimal interpolating curve (spline) that respects the data. This problem is recast as an optimal control problem with intermediate constraints if the interpolating curve is required to be restricted to some neighborhood of the data. Traditional control theoretic splines find applications in several areas such as trajectory planning for mobile robots, air traffic control, contour modeling of images, etc., and the inclusion of intermediate constraints to these splines vastly improves their range of applications in approximation theory and machine learning apart from the original idea of serving as motion planning primitives.

On the front of tangible and concrete applications, we start with an aircraft landing-approach problem \cite{ref:Pie-85, ref:Lygero-99}. Consider an aircraft landing-approach manoeuvre from the start of the landing phase until touch-down. For a smooth and safe landing, the aircraft must hit several frames at multiple intermediate distances from the start of the runway, and our results constitute a perfect fit in this particular problem. More generally, the class of path planning problems in the presence of obstacles \cite{ref:BlaOnoWil-11} can also be recast as a control problem with intermediate constraints. The emerging topic of control of opinion dynamics \cite{ref:EPS-17} studies the process of influencing people's opinions over a social network, where the propagation of opinions over such networks are modeled in a variety of ways. A typical problem considered in this framework is to design the campaign duration for each agent so as to ensure that certain favorable opinion levels are reached at the intermediate and final times.

There are two key contributions of this article, the first being on the theoretical front. We provide a set of first order necessary conditions for optimality in the sparse optimal control problem with intermediate constraints. To this end, a new Pontryagin maximum principle (PMP) for sparse optimal control problems with intermediate constraints is established. The techniques needed to establish this PMP derive closely from those in \cite{ref:DmiKag-11} where the authors extensively studied optimal control problems with intermediate constraints. The derivation starts by applying a transformation of the time variable, and it results in every intermediate time interval being mapped injectively to a single and fixed time interval (say $[0,1]$). This technique has been known for decades and applied to various classical variational calculus (CVC) problems see \cite{ref:CHden-37}, but it deserves to be far more widely known. In optimal control theory, this technique was applied \cite{ref:VolOst-69} for phase-constrained problems apart from the more recent work \cite{ref:DmiKag-11}, but sparsity has not been considered anywhere else. The results of \cite{ref:DmiKag-11} are \emph{not} directly applicable in our context because the problem data in \cite{ref:DmiKag-11} are smooth whereas in our setting the cost in the objective function is discontinuous in the control action variable. Indeed, maximal sparsity in time naturally involves the minimization of the so-called \(\Lp{0}\)-norm of the controller, and this particular ``norm'' can be recast as an integral with a discontinuous cost on the control actions. The standard versions of the PMP do not apply, requiring the application of a non-smooth Pontryagin maximum principle. Moreover, the requirement of maintaining desired levels of sparsity at the intermediate times further increases the complexity of the problem, requiring a careful reworking of the steps in \cite{ref:DmiKag-11} and an appeal to the nonsmooth PMP \cite[Theorem 22.26]{ref:Cla-13}. This is the content of \S\secref{s:problem formulation}-\ref{s:Proof}.

The second contribution is on the numerical front. It is well-known that, in general, indirect methods for algorithmically arriving at an optimal state-action trajectory (even corresponding to smooth data) from the necessary conditions given by the standard PMP is a difficult task. Indeed, typical algorithms rely on different variants of the Newton-Raphson shooting and homotopy methods, and deeply suffer from the lack of reasonable domains of convergence. For problems with intermediate constraints, such issues are further complicated by the fact that now the adjoint trajectories are not even continuous. Our efforts to utilize off-the-shelf solvers for our problems failed, forcing us to look for alternatives. We announce and present in \secref{s:simulations} a new `hybrid' algorithm that combines the stochastic approximation algorithm \cite{ref:Borkar-08} and Newton-Raphson iterations in a novel fashion. This particular algorithm has successfully solved all the numerical problems considered in this article where the traditional algorithms have not, and combines the best features of the traditional shooting algorithms (e.g., quadratic convergence rates) while removing the key problematic issues with them (e.g., small regions of convergence). While a detailed theoretical treatment of this algorithm will be presented elsewhere, we provide extensive details about the process of employing this new `hybrid' algorithm to arrive at optimal state-action trajectories, in a tractable fashion, from the necessary conditions given by our PMP.

\section{Problem Formulation}
\label{s:problem formulation}
In this section we formulate our problem of sparse control with intermediate constraints. Let \(\tinit < \tfin\) and consider a nonlinear dynamical system modeled by
\begin{equation}
	\label{e:plant}
	\dot{\state}(t) = \sys(t, \state(t), \cont(t)) \quad \text{for a.e.  } t \in [\tinit, \tfin],
\end{equation}
where $\state(t)\in\mathbb{R}^d$ is the vector of states and $\cont(t)\in\mathbb{R}^r$  is the control input at time $t$. We assume that $\sys:\mathbb{Q}\lra\R^d$, where the set $\mathbb{Q}\subset \Rp\times\mathbb{R}^d\times\mathbb{R}^r$ is open, is continuous and continuously differentiable with respect to the space variable $\state$ and continuous with respect to the control variable $\cont$.

For the system \eqref{e:plant}, the article \cite{ref:DmiKag-11} defined intermediate constraints by first denote a finite set of intermediate times $\tinit, \tintm{1},\ldots, \tfin$ with $\tinit < \tintm{1} <\cdots< \tfin$, that are permitted to be free. To these intermediate times, one adjoins the corresponding states to construct the vector
\begin{equation}
	\label{e:intermediate points}
	\intpt \Let \bigl((\tinit,\state(\tinit)), (\tintm{1}, \state(\tintm{1})), \ldots, (\tfin, \state(\tfin))\bigr).
\end{equation}
Intermediate constraints are described in terms of the vector \(\intpt\) in the form of the following equality and inequality constraints:
\begin{align}
	\eqcnst{j}(\intpt) &= 0\quad \text{for }j=1,2,\ldots,q,\label{e:hj constraints}\\
	\ineqcnst{i}(\intpt) &\leq 0\quad \text{for }i=1,2,\ldots,m, \label{e:gi constraints}
\end{align}
where the real-valued maps $\eqcnst{j}$ and $\ineqcnst{i}$ are defined on a given open set $\Gamma\subset (\Rp\times\mathbb{R}^d)^{\nu+1}$ and have continuous derivatives on $\Gamma$. A control $\cont$ is said to be \emph{feasible} if it satisfies the plant dynamics \eqref{e:plant} and the intermediate constraints \eqref{e:hj constraints} and \eqref{e:gi constraints} together with the action constraint
\begin{equation}
	\label{e:feasible cont}
	\cont(t) \in \admcont \quad \text{for a.e.  } t \in [\tinit, \tfin],
\end{equation}
where $\admcont$ is a given closed, compact and bounded set in $\mathbb{R}^r$.

Among the feasible controls described above, we seek a \emph{sparse optimal control} that minimizes the performance index
\begin{equation}
	\label{e:cost}
	\J(\cont) = \lambda \int_{\tinit}^{\tfin} \indic(\cont(t)) + \ell(\gamma),
\end{equation}
where $\lambda>0$ is a weight parameter, and the map \(\ell\) in the second term is a non-negative measurable cost function defined over the vector of intermediate points \(\intpt\). The integral in the first term of \eqref{e:cost} is the so-called the $L_0$ norm of the control \cite{ref:ChaNagQueRao-16} and it is the measure of the set on which $\cont$ is non-zero in the time interval $[\tinit,\tfin]$, given by
\[
   \norm{\cont}_{L_0([t_0, t_{\nu}])} \Let \text{Leb} \bigl(\{ t \in [t_0, t_{\nu}] \ | \ \cont(t) \neq 0\} \bigr);
\]
it can be written in integral form as
\[
     \norm{\cont}_{L_0([t_0, t_{\nu}])} =  \int_{\tinit}^{\tfin} \indic(\cont(t)).
\]

\begin{remark}[Intermediate constraints on the control]
In this work we include intermediate constraints on the control $\cont$ of the following form:
\begin{equation}
	\label{e:interemediate constraints sparsity}
	\lspcnst{k} \leq \int_{\tinit}^{\tintm{k}} \indic(\cont(t)) \leq \rspcnst{k}\quad \text{for }k=1,2,\ldots,\nu.
\end{equation}
To include these constraints into the problem mentioned above, we define an additional scalar state $\spstate$ that satisfies the differential equation
\begin{equation}
	\label{e:spdyn}
	\spstatedot(t) = \indic(\cont(t)),\quad \spstate(\tinit) = 0,\quad t\in[\tinit, \tfin].
\end{equation}
Then the intermediate constraints \eqref{e:interemediate constraints sparsity} can be equivalently written as
\begin{equation}
	\label{e:intermediate constraints sparsity state}
	\lspcnst{k} \leq \spstate(\tintm{k}) \leq \rspcnst{k}\quad \text{for }k=1,2,\ldots,\nu.
\end{equation}
	Despite the indicator function being discontinuous, for any measurable map \(t \mapsto \cont(t)\), the joint system
\begin{equation}
	\label{e:joint dynamics}
	\pmat{\spstatedot(t)\\ \dot\state(t)} = \pmat{\indic(\cont(t)) \\ \sys(t, \state(t), \cont(t))}
\end{equation}
	satisfies the \textit{Carath\'eodory conditions} \cite[Chapter 1]{ref:Fil-88}, and consequently, the system \eqref{e:joint dynamics} admits a Carath\'eodory solution.  Since the constraints \eqref{e:intermediate constraints sparsity state} are on the state $\spstate$, they can be included in
the inequality constraints in \eqref{e:gi constraints} via following modifications:
\begin{equation}
	\label{e:modf}
\begin{cases}
    \begin{aligned} 
       & t \mapsto \fullstate(t) \Let \pmat{ \spstate(t)\\  \state(t)} \in \R^{d+1}, \\
		& \intpt = \bigl((\tinit,\fullstate(\tinit)), (\tintm{1}, \fullstate(\tintm{1})), \ldots, (\tfin, \fullstate(\tfin))\bigr),\\
       & \eqcnst{q+1}(\intpt) \Let \spstate(\tinit) =0, \\
       & \ineqcnst{m+k}(\intpt) \Let ( \spstate(\tintm{k}) - \lspcnst{k})(\spstate(\tintm{k}) - \rspcnst{k}) \leq 0,\quad k=1,2,\ldots,\nu.\\
     \end{aligned}
\end{cases}
\end{equation}
\end{remark}

Distilling the preceding discussion, we arrive at the sparse optimal control problem with intermediate constraints:
\begin{equation}
	      \label{e:OCP} \tag{OCP}
	      \begin{aligned}
			\minimize_{\cont}	& &&  \J(\fullstate, \cont, \intpt) = \lambda \int_{\tinit}^{\tfin} \indic(\cont(t)) + \ell(\gamma).    \\
			\sbjto		        & && \begin{cases}
			                  \dot{\state}(t) = \sys(t, \state, \cont) \quad \text{for a.e.  } t \in [\tinit, \tfin], \\
			                  \spstatedot(t) = \indic(\cont(t)) \quad \text{for a.e.  } t \in [\tinit, \tfin],  \\
							   \eqcnst{j}(\intpt) = 0\quad \text{for }j=1,2,\ldots,q+1,\\
											\ineqcnst{i}(\intpt) \leq 0\quad\text{for } i=1,2,\ldots,m, \ldots, m + \nu, \\
                                            [\tinit, \tfin] \ni t \mapsto \cont(t) \in \admcont \text{ Lebesgue measurable}.
			                			\end{cases}
		\end{aligned}
\end{equation}

	\begin{remark}
		\label{r:differences}
		Two interesting features of \eqref{e:OCP} stand out: one, the integrand in the integral cost in \eqref{e:OCP} is discontinuous in \(\cont\), and two, we have constraints defined over multiple intermediate points. Since the aforementioned integrand is discontinuous in \(\cont\), the standard smooth version \cite[Chapter 4]{ref:Lib-12}, \cite[Theorem 22.13]{ref:Cla-13}, of the Pontryagin maximum principle (PMP) does not apply, and one must resort to the nonsmooth PMP \cite[Theorem 22.26]{ref:Cla-13} to account for this discontinuity. Moreover, even this particular nonsmooth PMP is inapplicable directly because of the intermediate constraints and the (possibly) free intermediate time instants. Following the techniques of \cite{ref:DmiKag-11}, we will reduce \eqref{e:OCP} to an easier and standard optimal control problem without intermediate constraints using a suitable scaling of the time variable, and then employ \cite[Theorem 22.26, Page 465]{ref:Cla-13} to provide necessary conditions for optimality in this standard problem.
	\end{remark}

    \begin{remark} \label{r:general form}
		We note that the temporal components of the vector of intermediate points \(\intpt\) are not necessarily fixed a priori. The intermediate point cost \(\ell(\intpt)\) is general enough to be of various types, e.g., it can be a function of the intermediate time instants \(t_k\), the intermediate states \(\fullstate(t_k)\), or both. Moreover, any optimal control problem in Lagrange form (i.e., integral constraints on the paths,) can be converted into its corresponding Mayer form (i.e., terminal cost) and included in \(\ell(\intpt)\) by defining a new state in a standard way. To wit, the problem \eqref{e:OCP} treated here is quite general and our main theorem below can be employed to various forms of optimal control problems, and a few special cases are described at the end of this section.
	\end{remark}
 
	\begin{definition}[Admissible process]
		The map \( t \mapsto \admps(t) \Let (\fullstate(t), \cont(t), \intpt)\) is said to be an \emph{admissible process} of the problem \eqref{e:OCP} if it satisfies all the constraints of this problem, and then we say that \(\fullstate(t)\) is the vector of admissible states corresponding to the admissible control action \(\cont(t)\) at time \(t\), and \(\intpt\) is the corresponding vector of admissible intermediate points.
	\end{definition}
             
	\begin{definition}[Local minimizer]
		\label{d:LM OCP}
		An \emph{admissible process} \(\optadmps = (\optfullstate, \optcont, \optintpt)\) is said to be a \emph{local minimizer} of \eqref{e:OCP} provided that there exists \(\epsilon > 0\) such that for every admissible process \(\admps = (\fullstate, \cont, \intpt)\) satisfying \(\unifnorm{ \fullstate - \optfullstate} \leq \epsilon\), and \(\abs{\tintm{k} - \opttintm{k}} \leq \epsilon\) for each \(k = 1, 2, \ldots, \nu\), we have \(\J(\optadmps) \leq \J(\admps)\).\footnote{The notation \(\unifnorm{\cdot}\) stands for the uniform norm; the distance induced by this norm between two controls \(u_1\) and \(u_2\) defined on \(D_1\) and \(D_2\), respectively, is given by \(\unifnorm{u_1 - u_2} = \sup_{s\in D_1\cap D_2} \norm{u_1(s) - u_2(s)}\).}
	\end{definition}

	The definition of a local minimizer in Definition \ref{d:LM OCP} is the free intermediate time-instants version of the definition of a local minimizer in \cite[Page 450]{ref:Cla-13} and it is identical to the definition of a \emph{strong minimizer} in \cite{ref:DmiKag-11}.  Our main result is the following theorem: 

	\begin{theorem}
		\label{t:OCP}
		Consider the optimal control problem \eqref{e:OCP}, and refer to the notations introduced in this section. If the process \( [\tinit,  \tfin] \ni t \mapsto \optadmps(t) = (\optfullstate(t), \optcont(t), \optintpt) \in \R^{d+1} \times \admcont \times \Gamma\) is a local minimizer of \eqref{e:OCP}, then there exist a scalar \(\eta\in\{0, 1\}\), a piecewise continuous map 
		\[
			[\optt_0, \optt_{\nu}] \ni t \mapsto \fullcostate(t) \Let \pmat{\spcostate(t) \\ \costate(t) \\ \rocostate(t)}, \quad \spcostate(t) \in \R^1, \quad \costate(t) \in \R^{d}, \quad \rocostate(t) \in \R^1,
		\]
		and multipliers \(\alpha \in \R^{q+1}, \beta \in \R^{m+\nu}\), such that with the Hamiltonian defined by
		\[
			H^{\eta}(\fullcostate, t, \fullstate, \cont) \Let  \spcostate\indic(u) + \inprod{\costate}{\sys(t, \state, \cont)} + \rocostate - \eta \lambda \indic(u), 
		\]  
		for  \(\bigl(\fullcostate, t, \fullstate, \cont \bigr) \in \R^{d+2}\times[\tinit, \tfin]\times\R^{d+1}\times\R^r\), the following conditions hold:
		\begin{enumerate}[label=\textup{(\ref{t:OCP}-\alph*)}, leftmargin=*, widest=b, align=left]
			\item \label{t:OCP:nt} non trivality: \( ( \eta, \fullcostate(t), \alpha, \beta) \neq 0 \quad  \text{for a.e.} \, t \in [\opttinit, \opttfin],\);
			\item \label{t:OCP:nn} nonnegativity: \( \beta \geq 0\);
			\item \label{t:OCP:cs} complementary slackness: \( \inprod{\beta}{\ineqc(\intpt\opt)} = 0\);
			\item \label{t:OCP:adjeq} the adjoint equations, for a.e.\ \(t \in [\opttinit, \opttfin]\), 
				\begin{equation*}
					\begin{aligned}
						& \frac{\dd \spcostate(t)}{\dd t}  = 0,\\
						& \frac{\dd \costate(t)}{\dd t} = -\Bigl(\frac{\partial \sys}{\partial \state}(t, \optstate(t), \optcont(t))\Bigr)^\top\costate(t),\\
						& \frac{\dd \rocostate(t)}{\dd t}  = -\Bigl(\frac{\partial \sys}{\partial t}(t, \optstate, \optcont(t))\Bigr)^\top\costate(t),
					\end{aligned}
				\end{equation*}    
			\item \label{t:OCP:tv} transversality conditions:
				\begin{equation*} 
					\begin{aligned}
						\begin{cases}
							\text{conditions at the endpoints of the interval:} \\
							\begin{cases}
								\spcostate(\tinit\opt) = \alpha_0, \\
								\spcostate(\tfin\opt) = - \beta_{m+\nu}(2\spstate(\tfin\opt) - \lspcnst{\nu} -\rspcnst{\nu}),  \\
								\costate(\tinit\opt) = \eta\ell_{\state(\tinit)}(\optintpt) + \big[\frac{\partial \eqc(\optintpt)}{\partial \state(\tinit)}\big]^\top\alpha + \big[\frac{\partial \ineqc(\optintpt)}{\partial \state(\tinit)}\big]^\top\beta,\\
								\costate(\tfin\opt) = - \eta\ell_{\state(\tfin)}(\optintpt) - \big[\frac{\partial \eqc(\optintpt)}{\partial \state(\tfin)}\big]^\top\alpha - \big[\frac{\partial \ineqc(\optintpt)}{\partial \state(\tfin)}\big]^\top\beta, \\
								\rocostate(\tinit\opt) = \eta\ell_{\tinit}(\optintpt) + \inprod{\alpha}{\frac{\partial \eqc(\optintpt)}{\partial \tinit}} + \inprod{\beta}{\frac{\partial \ineqc(\optintpt)}{\partial \tinit}},\\
								\rocostate(\tfin\opt) = -\eta\ell_{\tfin}(\optintpt) - \inprod{\alpha}{\frac{\partial \eqc(\optintpt)}{\partial \tfin}} - \inprod{\beta}{\frac{\partial \ineqc(\optintpt)}{\partial \tfin}}, 
							\end{cases}\\
							\text{discontinuity conditions at the intermediate points}, \, \text{for each} \, k = 1,\cdots, \nu-1, \\
							\begin{cases}
								\Delta\spcostate(\optt_k) = \spcostate(\optt_k+) - \spcostate(\optt_k-) =  \beta_{m+\nu}(2\spstate(\optt_k) - \lspcnst{k} -\rspcnst{k}),  \\			                                                         
								\Delta\costate(\optt_k) = \costate(\optt_k+) - \costate(\optt_k-) = \eta\ell_{\state(t_k)}(\optintpt) + \big[\frac{\partial \eqc(\optintpt)}{\partial \state(t_k)}\big]^\top\alpha+ \big[\frac{\partial \ineqc(\optintpt)}{\partial \state(t_k)}\big]^\top\beta , \\
								\Delta\rocostate(\optt_k) = \rocostate(\optt_k+) -  \rocostate(\optt_k-) = \eta\ell_{t_k}(\optintpt) + \inprod{\alpha}{\frac{\partial \eqc(\optintpt)}{\partial t_k}} + \inprod{\beta}{\frac{\partial \ineqc(\optintpt)}{\partial t_k}};
							\end{cases}
						\end{cases}
					\end{aligned}
				\end{equation*}  
			\item \label{t:OCP:MP} the Hamiltonian maximum:
				\[
					\optcont(t) = \underset{ u \in \admcont}\argmax\, H^{\eta}(\fullcostate, t, \optfullstate, \cont) \quad \text{ for a.e.\ } t \in [\tinit\opt, \tfin\opt];
				\]
			\item \label{t:OCP:hmc}the Hamiltonian constancy:                    
				\[
					H^{\eta}(\fullcostate, t, \optfullstate, \cont\opt) = 0 \quad \text{for a.e.\ } t \in [\tinit\opt ,\tfin\opt];
				\]        
			\item \label{t:OCP:intpt} Intermediate point:
				\[
					H^{\eta}(\fullcostate, t_k\opt+, \optfullstate, \cont\opt) - H^{\eta}(\fullcostate, t_k\opt-, \optfullstate, \cont\opt) = 0 \quad \text{for } k = 1,\ldots,\nu-1.
				\]
		\end{enumerate}
	\end{theorem}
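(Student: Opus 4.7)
The plan is to follow the reduction strategy of \cite{ref:DmiKag-11}: transform \eqref{e:OCP} into a classical (no intermediate constraints) optimal control problem on a single \emph{fixed} time interval, apply the nonsmooth Pontryagin maximum principle \cite[Theorem 22.26]{ref:Cla-13} to the reduced problem, and then translate the resulting necessary conditions back to the original time variable. This handles the free intermediate times $t_1,\ldots,t_{\nu-1}$, the coupled intermediate constraints, and the discontinuity of $\indic(\cont)$ in one stroke. The auxiliary state $\spstate$ introduced in \eqref{e:spdyn} is essential throughout, since it has already absorbed the discontinuity of the indicator into the dynamics of a Carath\'eodory system, leaving only a pointwise-in-$u$ maximization to deal with.

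\textbf{Time reparametrization.} I would first introduce on each subinterval, for $k = 1,\ldots,\nu$, the length $\tau_k \Let t_k - t_{k-1}$ and the rescaling $t = t_{k-1} + \tau_k s$ with $s \in [0,1]$. Writing $\fullstate_k(s) \Let \fullstate(t_{k-1} + \tau_k s)$ and $\cont_k(s) \Let \cont(t_{k-1} + \tau_k s)$, the joint system \eqref{e:joint dynamics} becomes, on $[0,1]$,
\begin{equation*}
    \fullstate_k'(s) = \tau_k \pmat{ \indic(\cont_k(s)) \\ \sys(t_{k-1} + \tau_k s, \state_k(s), \cont_k(s))},
\end{equation*}
and I promote each $\tau_k$ and each $t_{k-1}$ to state variables by adjoining $\tau_k' = 0$ and $t_{k-1}' = 0$. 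Concatenating these $\nu$ blocks yields a single autonomous-in-$s$ control system on $[0,1]$ whose stacked state $(\fullstate_1,\ldots,\fullstate_\nu, t_0, \tau_1,\ldots,\tau_\nu)$ contains all the intermediate data $\optintpt$. The matching requirements $\fullstate_k(1) = \fullstate_{k+1}(0)$ and $t_{k-1}+\tau_k = t_k$, jointly with $\eqc(\optintpt) = 0$, $\ineqc(\optintpt) \leq 0$, now take the form of pure endpoint constraints of the stacked system; the cost becomes $\ell(\optintpt) + \lambda \sum_{k=1}^{\nu} \int_0^1 \tau_k \indic(\cont_k(s))\, ds$, which is of standard Bolza type. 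A direct check shows that local minimality in the sense of Definition \ref{d:LM OCP} survives this change of variables.

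\textbf{Applying the nonsmooth PMP and unrescaling.} Since the stacked problem is a standard Mayer--Lagrange problem on a fixed interval with only endpoint constraints and an integrand that is discontinuous solely in $u$, I would invoke \cite[Theorem 22.26]{ref:Cla-13} to obtain a cost multiplier $\eta \in \{0,1\}$, KKT multipliers $\alpha, \beta \geq 0$ with $\inprod{\beta}{\ineqc(\optintpt)} = 0$, adjoint covectors for each stacked state, nontriviality, pointwise Hamiltonian maximization, and the endpoint transversality conditions of the stacked problem. Unrescaling by $s \mapsto t$ block by block, the covector of $\fullstate_k$ becomes $(\spcostate,\costate)$ on $[\optt_{k-1},\optt_k]$, yielding the first two adjoint equations of \ref{t:OCP:adjeq} and Hamiltonian maximization \ref{t:OCP:MP}. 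The covector of the state $t_{k-1}$ provides the scalar $\rocostate$ on each block together with the ODE $\dot{\rocostate} = -(\partial_t \sys)^\top \costate$, and the covector of $\tau_k$ satisfies an ODE whose $s$-integral equals $\int_{\optt_{k-1}}^{\optt_k} H^\eta\, dt$ after unrescaling; its transversality, combined with the free choice of $\tau_k$, delivers the Hamiltonian constancy \ref{t:OCP:hmc} and the intermediate equality \ref{t:OCP:intpt}.

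\textbf{Jumps and main obstacle.} The discontinuity formulas in \ref{t:OCP:tv} at an interior instant $\optt_k$ arise because the matching equalities $\fullstate_k(1) = \fullstate_{k+1}(0)$, together with the dependence of $\eqc, \ineqc, \ell$ on the intermediate slot $\state(t_k)$, contribute to the endpoint transversality of the stacked problem both from the left (block $k$) and from the right (block $k+1$); subtracting these two contributions produces exactly the gradient terms $\eta \ell_{\state(t_k)} + (\partial \eqc/\partial \state(t_k))^\top \alpha + (\partial \ineqc/\partial \state(t_k))^\top \beta$ stated in the theorem, and the analogous computation in the $\spstate$ and $t_k$ coordinates gives the jumps of $\spcostate$ and $\rocostate$; the endpoint formulas at $\optt_0$ and $\optt_\nu$ follow from the same transversality applied at $s = 0$ of block $1$ and $s = 1$ of block $\nu$, the only nontrivial bookkeeping being the \(\spstate\)-inequality multiplier because the constraint $(\spstate(\optt_k)-\lspcnst{k})(\spstate(\optt_k)-\rspcnst{k}) \leq 0$ is quadratic. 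The main technical obstacle I expect is not the bookkeeping but the twin verification that (i) the time change truly preserves local minimality in the uniform-plus-free-time-instant sense of Definition \ref{d:LM OCP}, and (ii) the hypotheses of \cite[Theorem 22.26]{ref:Cla-13} are satisfied when the integrand is the Borel-measurable indicator $\indic(u)$; for (ii), since $\indic$ depends only on $\cont$, the adjoint ODEs and their generalized gradients remain smooth in $(\fullstate,t)$, and the maximization characterization of the optimal $\cont$ is compatible with the nonsmooth framework of \cite{ref:Cla-13}.
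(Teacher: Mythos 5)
Your overall route --- rescale each subinterval onto \([0,1]\), stack the \(\nu\) blocks so that the intermediate constraints and the matching conditions become pure endpoint constraints, invoke the nonsmooth PMP of \cite[Theorem 22.26]{ref:Cla-13}, and translate the resulting conditions back --- is exactly the paper's strategy (both follow \cite{ref:DmiKag-11}), and your account of how the jump formulas in \ref{t:OCP:tv} arise from the left and right contributions of the matching constraints is the same bookkeeping the paper performs when it eliminates the auxiliary multipliers. But there is one concrete gap. Under your affine substitution \(t = t_{k-1}+\tau_k s\), the dynamics of block \(k\) read \(\tau_k\, \sys(t_{k-1}+\tau_k s,\cdot,\cdot)\), which depends \emph{explicitly} on \(s\); the stacked system is therefore not autonomous in \(s\), contrary to what you assert. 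This breaks the derivation precisely where you need it: the transversality of the \(\tau_k\)-covector only yields an \emph{integral} relation of the form \(\int_0^1(\cdots)\,ds = q_{\tau_k}(0)-q_{\tau_k}(1)\), and upgrading it to the pointwise identities \ref{t:OCP:hmc} and \ref{t:OCP:intpt} requires the Hamiltonian to be constant along each block, which is exactly what non-autonomy forfeits. The paper avoids this by making the running time itself a state, \(\dd\rho_k/\dd\tau = z_k\) with \(\rho_k(0)=t_{k-1}\), \(\rho_k(1)=t_k\), and by treating \(z_k>0\) as a genuine \emph{control}: the transformed Hamiltonian is linear in \(z_k\) over the open set \(]0,+\infty[\), so maximization forces the bracket multiplying each \(z_k\) to vanish pointwise --- this is \ref{t:OCP:hmc} directly --- while the covector of \(\rho_k\) supplies \(\rocostate\), its adjoint equation, and its jump conditions. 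Your version is repairable by adjoining \(\theta_k(s)=t_{k-1}+\tau_k s\) as a state with \(\theta_k'=\tau_k\), which is the same device with \(z_k\) frozen to a constant, but as written the constancy and intermediate-point conditions do not follow.

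A secondary point: the preservation of local minimality under the reparametrization is not quite a ``direct check.'' The paper proves it as a separate equivalence theorem, because the uniform-norm neighborhoods in Definition \ref{d:LM OCP} live on trajectory-dependent time domains \([\tinit,\tfin]\) that move with the free instants \(t_k\); one must compare compositions of the form \(\auxy\circ\auxpinv\) and \(\optauxy\circ\optauxpinv\) on the overlap of the two domains, using uniform continuity of the optimal trajectory and a modulus-of-continuity bound on the inverse time maps. Your proposal correctly identifies this as one of the two main obstacles but does not supply the argument.
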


	\begin{remark}
		Sometimes the \(L_1\)-norm over the control is employed to introduce sparsity in control, a detailed treatment of the sparsity property of \(L_1\)-optimal control problems for linear systems has been given in \cite{ref:NagQueNes-16}. It is well known that the exact \(L_0\)-optimal control problem is computationally difficult to solve when the system dynamics is non-affine in the control variable, and sometimes the \(L_1\)-optimal control problem may be employed as a surrogate for the \(L_0\)-version. In this article, apart from proving Theorem \ref{t:OCP}, we shall also introduce a new computational tool to solve the rather complicated \(L_0\)-optimal control problem with intermediate constraints.
	\end{remark}
	    
	A proof of this theorem will occupy \secref{s:Proof}. Let us briefly examine some important special cases of \eqref{e:OCP}. Consider \eqref{e:OCP} with an autonomous system model, i.e., \(\dot{\state}(t) = \sys(\state(t), \cont(t))\), and no intermediate points, i.e., \(\nu = 1\). Then \eqref{e:OCP} reduces to a standard sparse optimal control problem with free terminal time. Such problems have been investigated in detail, e.g., in the context of \emph{maximum hands-off control} \cite{ref:ChaNagQueRao-16, ref:NagQueNes-16} aimed at minimizing the controller activation time. In \cite{ref:SriCha-16} the authors proposed a jammer's perspective for \emph{sporadic denial of service} (DoS) attacks on the control signal from the perspective of sparsity. Similar problems have been investigated in the context of \emph{sparse optimal multiplexing} of linear control systems in \cite{ref:yogesh-19}, and concerns the design of sparse multiplexed controllers for an ensemble of linear systems. 


\section{Proof of the main result}
\label{s:Proof}
In this section we sketch the proof of Theorem \eqref{t:OCP} by segmenting it into multiple subsections for clarity. We first define a sparse optimal control problem and employ a nonsmooth PMP to derive necessary conditions for optimality in this problem. Then our problem \eqref{e:OCP} is transformed into this standard form using a transformation technique from \cite{ref:DmiKag-11} based on a suitable scaling of the time variable, and then we obtain necessary conditions for optimality in the aforementioned sparse optimal control problem. In order to demonstrate the applicability of the necessary conditions so derived, we show that optimality is preserved under the said transformation by establishing an equivalence between \eqref{e:OCP} and the transformed problem. In the light of this equivalence, we derive necessary conditions for \eqref{e:OCP} from the necessary conditions of the transformed problem. What makes the proof go through smoothly in this nonsmooth context is the fact that the transformation does not change \eqref{e:OCP} qualitatively, (as described in Remark \ref{r:same cost} below) but simplifies its structure.
             
\subsection{The standard problem}
\label{ss:SP}
Consider the special case of the \eqref{e:OCP} where we have an autonomous system with no intermediate points, i.e., \(\nu =1\), as discussed in Section \secref{s:problem formulation}, and let the initial time \(\tinit = 0\), and the terminal time \(\tfin = t_1\) be fixed to \(t_1 = T\), for some given real number \(T > 0\). Then \eqref{e:OCP} reduces to a standard sparse optimal control problem on a fixed time interval \( [0, T] \). The dynamics of the sparse state of this standard problem will remain the same as that of \eqref{e:OCP} while the system dynamics is given by \(\dot{\state}(t) = \sys(\state(t), \cont(t))\), and the vector of intermediate points reduces to the boundary points values,
\begin{equation}
	\label{e:SP intpt}
	\intpt  = (\fullstate(\tinit),\fullstate(T))  \in \Gamma_{sp} \subset (\R^{d+1})^2,
\end{equation} 
and the constraints over \(\intpt\) given by \eqref{e:SP intpt} reduce to:                           
\begin{equation}
	\label{e:SP constraints}
	\begin{cases}
	    \begin{aligned} 
			& \eqcnst{j}(\intpt) = 0\quad \text{for }j=1,2,\ldots,q,\\
	        & \eqcnst{q+1} =  \spstate(0) =0, \\
			& \ineqcnst{i}(\intpt) \leq 0\quad\text{for } i=1,2,\ldots,m,\\
	        & \ineqcnst{m+1}  =  ( \spstate(T) - \lspcnst{T})(\spstate(T) - \rspcnst{T}) \leq 0.
		\end{aligned}
	\end{cases}
\end{equation}
A control \(\cont\) is \emph{feasible} if it satisfies the dynamics of this problem described above, along with intermediate constraints given by \eqref{e:SP constraints}, and control constraints given by \eqref{e:feasible cont}. Consequently, the standard sparse optimal control problem can be written as
\begin{equation}
\label{e:SP} \tag{SP}
	      \begin{aligned}
			\minimize_{\cont}	& &&  \J(\fullstate, \cont, \intpt) = \lambda \int_{0}^{T} \indic(\cont(t)) + \ell(\gamma),    \\
			\sbjto		        & && \begin{cases}
			                  \dot{\state}(t) = \sys(\state(t), \cont(t)) \quad \text{for a.e. } \, t \in [0, T], \\
			                  \spstatedot(t) = \indic(\cont(t)) \quad \text{for a.e. } \, t \in [0, T],  \\
							  \text{constraints }\eqref{e:SP constraints},\\
                                            [\tinit, T] \ni t \mapsto \cont(t) \in \admcont \, \text{ Lebesgue measurable}.
			                			\end{cases}
		\end{aligned}
\end{equation}   
A local minimizer of \eqref{e:SP} satisfies the properties in Definition \ref{d:LM OCP} for \(\nu =  1\) and \(\tintm{\nu} = T\), which makes it identical to a local minimizer in the sense of \cite[Page 437]{ref:Cla-13}.

We get first order necessary conditions for optimality in \eqref{e:SP} by adapting the nonsmooth PMP \cite[Theorem 22.6, Page 465 ]{ref:Cla-13} in the following form:
\begin{theorem}
	\label{t:SP}
	Consider the optimal control problem \eqref{e:SP}, and refer to the notations introduced in this subsection and the previous section. If the process \( [0, T] \ni t \mapsto \optadmps(t) = (\optfullstate(t), \optcont(t), \optintpt) \in \R^{d+1} \times \admcont \times \Gamma_{sp}\) is a local minimizer of \eqref{e:SP}, then there exist a scalar \(\eta\in\{0, 1\}\), an absolutely continuous map 
	\[
		[0, T] \ni t \mapsto \fullcostate(t) \Let \pmat{  \spcostate(t) \\ \costate(t)} \in \R^{\sysDim+1}, \qquad \costate(t) \in \R^{\sysDim}, \qquad   \spcostate(t) \in \R,
	\]
and multipliers \(\alpha \in \R^{q+1}, \beta \in \R^{m+1}\), such that with the Hamiltonian defined by
\[
H^{\eta}(\fullcostate, \fullstate, \cont) = \inprod{\costate}{\sys(\state, \cont)} + \spcostate\indic(u) - \eta \lambda \indic(u),
\]
for \( \bigl(\fullcostate, \fullstate, \cont\bigr) \in \R^{d+1}\times\R^{d+1}\times\R^r,\)
the following conditions hold:
	\begin{enumerate}[label=\textup{(\ref{t:SP}-\alph*)}, leftmargin=*, widest=b, align=left]
			\item \label{t:SP:nt}   non triviality: \( (\eta, \fullcostate(t), \alpha, \beta) \neq 0 \quad  \text{for a.e.\ } t \in [0, T]\);
			\item \label{t:SP:nn} nonnegativity: \( \beta \geq 0 \);
			 \item \label{t:SP:cs}  complementary slackness: \(\inprod{\beta}{\ineqc(\optintpt)} = 0\);
			  \item \label{t:SP:adjeq} the adjoint equations, for a.e.\ \(t \in [0, T]\),
			             \begin{equation*}
			                  - \dot{\fullcostate}(t) = \partial_{\fullstate}H^{\eta}(\fullcostate(t), \boldsymbol\cdot, \optcont(t))(\optfullstate(t))  \implies
							  \begin{cases}
			                      \dot{\spcostate}(t) = 0,\\
								  \dot{\costate}(t) = - \Bigl(\frac{\partial\sys}{\partial\state}(\optstate(t), \optcont(t))\Bigr)^\top\costate(t);
			                  \end{cases}
			             \end{equation*}
					 \item \label{t:SP:tv} transversality conditions:\footnote{Recall that \(N_E^L(\optintpt)\) is the limiting normal cone to \(E\) (the target set of the vector of intermediate points \(\intpt\) resulting from the equality and the inequality constraints) at \(\optintpt\); see e.g., \cite[p.\ 244]{ref:Cla-13} for details.}
			                         \begin{align*}
										 & ( \fullcostate(0), - \fullcostate(T)) \in \eta\nabla\intptcost(\optintpt) + N_E^L(\optintpt)\\
										 \implies & \begin{cases}
			                                                   \spcostate(0) = \alpha_0,\\
			                                                   - \spcostate(T) = \beta_0(2{\spstate}\opt(T) - \lspcnst{T} -\rspcnst{T}),\\
			                                                   \costate(0) = \eta\ell_{\state(0)}(\optintpt) + \big[\frac{\partial \eqc(\optintpt)}{\partial \state(0)}\big]^\top\alpha + \big[\frac{\partial \ineqc(\optintpt)}{\partial \state(0)}\big]^\top\beta,\\
			                                                   - \costate(T) = \eta\ell_{\state(T)}(\optintpt) + \big[\frac{\partial \eqc(\optintpt)}{\partial \state(T)}\big]^\top\alpha+ \big[\frac{\partial \ineqc(\optintpt)}{\partial \state(T)}\big]^\top\beta;
			                                          \end{cases}
			                          \end{align*}
			   \item \label{t:SP:maxc} the Hamiltonian maximum:                    
			                        \[
			                           \optcont(t) =  \argmax_{\cont \in \admcont}H^{\eta}(\fullcostate, \optfullstate, \cont) \quad \text{for a.e.  } t \in [0 ,T];
			                        \]
			     \item \label{t:SP:hmc} the Hamiltonian constancy:                    
			                        \[
			                           H^{\eta}(\fullcostate, \optfullstate, \cont\opt) = h \quad \text{for a.e.  } t \in [0 ,T].
			                        \]      
		 \end{enumerate}
	\end{theorem}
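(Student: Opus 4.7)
The plan is to apply the nonsmooth Pontryagin maximum principle of \cite[Theorem 22.26]{ref:Cla-13} directly. Problem \eqref{e:SP} already sits in the standard Mayer-plus-running-cost form on a fixed interval $[0,T]$: the extended state $\fullstate = (\spstate, \state)^\top \in \R^{d+1}$ evolves according to the autonomous joint dynamics in \eqref{e:joint dynamics}, the control set $\admcont$ is compact, the endpoint cost $\ell(\intpt)$ is continuously differentiable, and the endpoint constraints \eqref{e:SP constraints} define a closed target set $E \subset (\R^{d+1})^2$ for the pair $\intpt = (\fullstate(0), \fullstate(T))$. The running cost $\lambda\indic(\cont(t))$ may be retained as is since the hypotheses of the cited nonsmooth PMP only require essentially bounded, measurable integrands.

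The first observation I would make is that although the running cost and the augmented dynamics involve the discontinuous map $\cont \mapsto \indic(\cont)$, the Hamiltonian
\[
	H^{\eta}(\fullcostate, \fullstate, \cont) = \inprod{\costate}{\sys(\state, \cont)} + \spcostate \indic(\cont) - \eta \lambda \indic(\cont)
\]
is continuously differentiable in $\fullstate$: $\sys$ is $C^1$ in $\state$ by hypothesis, and the indicator depends only on $\cont$. Hence the Clarke generalized gradient $\partial_{\fullstate} H^{\eta}$ collapses to the ordinary gradient, and the adjoint inclusion of \cite[Theorem 22.26]{ref:Cla-13} reduces to the classical ODE in \ref{t:SP:adjeq}. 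Because $H^{\eta}$ does not depend on $\spstate$, one immediately gets $\dot{\spcostate} = 0$; differentiating the remaining summand with respect to $\state$ yields $\dot{\costate} = -\bigl(\partial \sys/\partial \state\bigr)^\top \costate$.

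Next, I would unpack the transversality inclusion $(\fullcostate(0), -\fullcostate(T)) \in \eta \nabla \ell(\optintpt) + N_E^L(\optintpt)$ supplied by the nonsmooth PMP. Since $E$ is cut out by the smooth equalities $\eqcnst{j}(\intpt) = 0$ and inequalities $\ineqcnst{i}(\intpt) \leq 0$, the limiting normal cone at $\optintpt$ admits the classical Lagrange-multiplier representation, producing scalars $\alpha \in \R^{q+1}$ and $\beta \in \R^{m+1}$ with $\beta \geq 0$ and $\inprod{\beta}{\ineqc(\optintpt)} = 0$. This single step yields simultaneously the nonnegativity \ref{t:SP:nn}, the complementary slackness \ref{t:SP:cs}, and, after separating the coordinates across $\state(0)$, $\state(T)$, $\spstate(0)$, $\spstate(T)$, the componentwise transversality identities \ref{t:SP:tv}. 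The nontriviality \ref{t:SP:nt} is a direct output of \cite[Theorem 22.26]{ref:Cla-13}.

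Finally, the Hamiltonian maximum condition \ref{t:SP:maxc} is the verbatim pointwise maximum principle of the nonsmooth PMP; the supremum is attained because $\cont \mapsto H^{\eta}(\fullcostate, \optfullstate, \cont)$ is bounded on the compact set $\admcont$ and the argmax is well-defined almost everywhere in the measurable-selection sense. The Hamiltonian constancy \ref{t:SP:hmc} follows from the autonomous structure of both the extended dynamics and of $H^{\eta}$, which is the standard consequence available for time-invariant problems on fixed intervals. The only real technical delicacy—more care than obstacle—is ensuring that the discontinuity of $\indic$ in $\cont$ does not propagate into the adjoint or endpoint machinery; this is settled precisely because the indicator is independent of $\fullstate$, so the nonsmoothness is confined entirely to the pointwise maximization step, and all remaining items inherit their classical form from \cite[Theorem 22.26]{ref:Cla-13} essentially verbatim.
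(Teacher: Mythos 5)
Your proposal is correct and takes essentially the same route as the paper, which likewise obtains Theorem~\ref{t:SP} by directly invoking the nonsmooth maximum principle \cite[Theorem 22.26]{ref:Cla-13} and noting that the discontinuity of \(\indic\) is confined to the control variable, so the adjoint equation, transversality, and constancy conditions retain their classical form. The only point treated equally lightly in both accounts is the Lagrange-multiplier representation of \(N_E^L(\optintpt)\), which tacitly assumes a constraint qualification for the maps defining the target set.
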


\subsection{Transformation of \eqref{e:OCP} to the standard form}
\label{ss:TP}         
Here we transform \eqref{e:OCP} with intermediate constraints and free intermediate time instants to a problem without intermediate constraints defined on a fixed time interval. The techniques are identical to the ones in \cite{ref:DmiKag-11}, and therefore, we shall provide only the essential steps, referring the reader to \cite{ref:DmiKag-11} for complete details. The aforementioned transformation applies to the complete problem \eqref{e:OCP} with non-autonomous dynamics including the intermediate points, unlike the procedure adopted in subsection \secref{ss:SP} where we specialized \eqref{e:OCP} to the case of \(\nu = 1\) and \(\dot{\state}(t) = \sys(\state(t), \cont(t))\). \secref{ss:equivalence} and \secref{ss:characterization} contain the verification of every step of the arguments and the calculations in \cite{ref:DmiKag-11}. This verification is essential since there are certain differences in the qualitative nature of the problem data between \cite{ref:DmiKag-11} and \eqref{e:OCP}. In particular, the cost function in \eqref{e:OCP} includes a discontinuous map of the control actions, whereas the problem data in \cite{ref:DmiKag-11} are smooth.

This transformation technique is based on a suitable scaling of the time variable `\(t\)'  to another variable `\(\tau\)', where each intermediate time interval  \(\Delta_k \Let [\tintm{k-1}, \tintm{k}]\) for \(k = 1, 2, \ldots, \nu,\) is scaled to one fixed time interval \([0,1]\), and then all the states and control trajectories are transformed from the `\(t\)'-time domain to the `\(\tau\)'-time domain on each interval in the following fashion:

For each \( k = 1, 2, \ldots, \nu,\) we define an absolutely continuous map
\begin{equation}
	\label{e:scaling}
	[0, 1] \ni \tau \mapsto \ro_k(\tau) \in [ t_{k-1}, t_k] 
\end{equation}
satisfying the differential equation with a completely specified set of boundary conditions
\begin{equation}
	\label{e:scaled dynamics}
	\frac{\dd \ro_k(\tau)}{\dd \tau} = \z_k(\tau), \quad \ro_k(0) = t_{k-1}, \quad \ro_k(1) = t_{k},
\end{equation}
with \(\z_k\) acting as a new control. The function \(\ro_k\) acts as the time variable `\(t\)' on the interval \(\Delta_k\), and the values of \( \ro_k(0), \ro_k(1)\), for each \(k\) are not fixed because \(t_{k-1}\) and \(t_k\) are permitted to be free as explained in Remark \ref{r: free tf instants} ahead. In order to retain the monotonicity of time \(t\), we only consider monotone strictly increasing functions \(\ro_k\), which in turn places the restriction that \(\z_k(\tau) > 0\) for a.e.\ \(\tau \in [0, 1]\). Note that from \eqref{e:scaled dynamics} we have \(\ro_k(1) = t_{k} = \ro_{k+1}(0)\), resulting in following continuity constraints:
\begin{equation}
	\label{e:ro continuity}
	\ro_{k+1}(0) - \ro_k(1) = 0 \quad \text{for } k = 1, \ldots, \nu-1.
\end{equation}

Let us consider any admissible process \(t \mapsto \admps(t) = (\fullstate(t), \cont(t), \intpt)\) of \eqref{e:OCP}. On each interval \(t \in \Delta_k\), we transform the state \(t \mapsto \fullstate(t)\) and control \(t \mapsto \cont(t)\) trajectories by defining new maps
\begin{equation} \label{e:transformed state}
[0,1] \ni \tau \mapsto  \y_k(\tau) = \fullstate(\ro_k(\tau)) \in \R^{d+1}, \quad \text{and}
\end{equation}
\begin{equation} \label{e:transformed control}
[0,1] \ni \tau \mapsto \vcont_k(\tau) = \cont(\ro_k(\tau)) \in \R^r, 
\end{equation}

Since \(\cont(t) \in \admcont\) a.e., we have \(\vcont_k(\tau) \in \admcont\) a.e., for each $k$. Further, the continuity of the state trajectory \(t \mapsto \fullstate(t)\) on \( [\tinit, \tfin]\) along with continuity of \(\tau \mapsto \ro_k(\tau)\) on \([0,1]\) imply continuity of \(\tau \mapsto \y_k(\tau)\) on \([0, 1]\), for each \( k = 1, \ldots, \nu\).
Also, from \eqref{e:ro continuity}, we have
\begin{equation*} 
\begin{aligned} 
&  \y_k(1) = \fullstate(\ro_k(1)) = \fullstate(t_k),   
&  \y_{k+1}(0) = \fullstate(\ro_{k+1}(0)) = \fullstate(t_k),
\end{aligned}
\end{equation*}     
resulting in the following continuity constraints on \(\y_k\) at the boundary points of the interval \([0, 1]\):
\begin{equation} \label{e:continuity of ts}
\y_{k+1}(0) - \y_k(1) = 0 \quad \text{for } k = 1, \ldots, \nu-1.
\end{equation}
From \eqref{e:transformed state} we see that, \(\y_k\) can be naturally written as \(\y_k = ( \ysp_k, \yb_k^\top)^\top\), where the maps \([0, 1] \ni \tau \mapsto \ysp_k(\tau) \in \R\), and \([0, 1] \ni \tau \mapsto \yb_k(\tau) \in \R^d\) correspond to \(\spstate, \state\) respectively, and satisfy the following dynamics in view of the chain rule:
\begin{equation}  \label{e:transformed dynamics}
\begin{cases}
\begin{aligned} 
&  \frac{\dd \ysp_k(\tau)}{\dd \tau} =   \spstatedot(\rho_k(\tau)) \frac{\dd \ro_k(\tau) }{\dd \tau} =    \z_k(\tau) \indic(\vcont_k(\tau)),\\
&  \frac{\dd \yb_k(\tau)}{\dd \tau} =     \dot \state(\ro_k(\tau)) \frac{\dd \ro_k(\tau) }{\dd \tau} =    \z_k(\tau)\sys(\ro_k, \yb_k, \vcont_k).              
\end{aligned}
\end{cases}
\end{equation}     
Under the preceeding transformation $\y_k$ and $\ro_k$ play the roles of the state variables, while $\vcont_k$ and $\z_k$ act as control inputs. 

Out of the collection \( (\y_k, \ro_k, \vcont_k, \z_k)_{k=1}^{k=\nu} \) of the new state-action trajectories, we define the following maps to reduce the notational clutter:
\begin{equation} 
\begin{cases}
\begin{aligned} 
& [0 ,1] \ni \tau \mapsto \ro(\tau) \Let ( \ro_1(\tau), \ro_2(\tau), \ldots, \ro_{\nu}(\tau)) \in \R^{\nu},\\
& [0 ,1] \ni \tau \mapsto \y(\tau) \Let ( \y_1(\tau), \y_2(\tau), \ldots, \y_{\nu}(\tau)) \in (\R^{d+1})^{\nu},\\
& [0 ,1] \ni \tau \mapsto \z(\tau) \Let ( \z_1(\tau), \z_2(\tau), \ldots, \z_{\nu}(\tau)) \in \R^{\nu},\\
& [0 ,1] \ni \tau \mapsto \vcont(\tau) \Let ( \vcont_1(\tau), \vcont_2(\tau), \ldots, \vcont_{\nu}(\tau)) \in (\R^r)^{\nu},                 
\end{aligned}
\end{cases}
\end{equation}   
Subsequently, in terms of \( \ro\) and $\y$ we can write the transformed vector of intermediate points as:
\begin{equation}
\intpttd = \intpttd(\ro, \y) =   \bigl((\ro_1(0), \y_1(0)), (\ro_2(0), \y_2(0)), \cdots, (\ro_{\nu}(0), \y_{\nu}(0)), (\ro_{\nu}(1), \y_{\nu}(1))\bigr),
\end{equation}
and as a result of above transformation, it satisfies \(\intpttd = \intpt\).
The inequality constraints on the sparse state given by \eqref{e:modf} become:
\begin{equation*}
\ineqcnst{m+k} = (\ysp_k(1) - \lspcnst{k})(\ysp_k(1) - \rspcnst{k}) \leq 0 \quad \text{for each } k = 1, 2, \ldots, \nu,
\end{equation*} 
and by using the continuity constraints \eqref{e:continuity of ts}, we can rewrite the above inequality constraints and equality constraints on the sparse state as:
\begin{equation} \label{e:inequality const spstd}
\begin{cases}
\begin{aligned} 
& \eqcnst{q+1}(\intpttd) = \ysp_1(0) = 0,\\
& \ineqcnst{m+k}(\intpttd) = (\ysp_{k+1}(0) - \lspcnst{k})(\ysp_{k+1}(0) - \rspcnst{k}) \leq 0 \quad \text{for each } k = 1, 2, \ldots, \nu-1,\\
& \ineqcnst{m+\nu}(\intpttd) = (\ysp_{\nu}(1) - \lspcnst{\nu})(\ysp_{\nu}(1) - \rspcnst{\nu}) \leq 0.
\end{aligned}
\end{cases}
\end{equation} 

\begin{remark}  The number of variables increases in the transformed domain, i.e., we have multiple \(\ro_k\) and \(\y_k\). This results in an increase in the number of constraints given by the continuity constraints \eqref{e:ro continuity} and \eqref{e:continuity of ts}. Further, as mentioned in Remark \ref{r:general form} , \(t_k, \fullstate(t_k)\) are not specified and the corresponding transformed values \(\ro_k(0), \ro_k(1), \y_k(0),\) and \(\y_k(1)\) are also not specified but follow all the transformed constraints including the continuity constraints \eqref{e:ro continuity} and \eqref{e:continuity of ts}.  \label{r: free tf instants}
\end{remark}

\begin{remark}[Transformed constraints] Note that \(\intpttd = \intpt\) as asserted above, and since all the constraints of \eqref{e:OCP} are functions of \(\intpt\), the transformed constraints \(\eqcnst{j}(\intpttd)\) and \(\ineqcnst{i}(\intpttd)\) are equal to \(\eqcnst{j}(\intpt)\) and \(\ineqcnst{i}(\intpt)\) respectively, for all \( j, i,\) including the equality and the inequality constraints \eqref{e:inequality const spstd} on the sparse state. Further, since \(\intpt\) is admissible, i.e, satisfies all the constraints of \eqref{e:OCP}, \(\intpttd\) automatically satisfies all transformed constraints of \eqref{e:OCP}. Consequently, the second part of the cost function in \eqref{e:Transformed cost} satisfies \(\ell(\intpttd) = \ell(\intpt)\).    \label{r:transformed constraints}
\end{remark} 

Finally, by employing \eqref{e:scaled dynamics} and \eqref{e:transformed control} we rewrite cost function of the \eqref{e:OCP} in the new coordinates as
\begin{equation} \label{e:Transformed cost}
\tilde{\J}(\ro, \y, \z, \vcont) =  \lambda \sum_{k =1}^{\nu} \int_{0}^{1} \indic(\vcont_k(\tau))\z_k(\tau) \,\dd \tau  +  \intptcost(\tilde{\intpt}) 
\end{equation} 
This completes the transformation of the admissible process \(t \mapsto \admps(t) = (\fullstate(t), \cont(t), \intpt)\).

Let the resulting transformed process obtained above be denoted by the map:
\begin{equation} \label{e:transformed admissible process}
[0, 1] \ni \tau \mapsto \admpstd(\tau) \Let (\ro(\tau), \y(\tau), \z(\tau), \vcont(\tau)).
\end{equation}
Further, let the transformation from \(\admps\) to \(\admpstd\) described above be denoted by the map \(\F\), i.e., \(\admpstd = \F(\admps)\). Clearly, this map \(\F\) is not unique and depends on the choice of the functions \(\z_k\). If we fix these functions, this transformation becomes unique, and the resulting transformed problem is given by:
\begin{equation}
\label{e:TP} \tag{TP}
\begin{aligned}
\minimize_{\z, \vcont}	& &&      \tilde{\J}(\ro, \y, \z, \vcont) =  \lambda \sum_{k =1}^{\nu} \int_{0}^{1} \indic(\vcont_k(\tau))\z_k(\tau) \,\dd \tau  +  \intptcost(\tilde{\intpt}),\\ 
\sbjto			& && \begin{cases}
\yspdot_k = \indic(\vcont_k)\z_k \quad \text{for } k =1, 2, \ldots, \nu, \\
\dot{\yb}_k = \z_k\sys(\ro_k, \yb_k, \vcont_k) \quad \text{for } k =1, 2, \ldots, \nu, \\
\dot{\rho}_k = \z_k \quad \text{for } k =1, 2, \ldots, \nu, \\
\y_{k+1}(0) - \y_k(1) = 0 \quad \text{for } k = 1, \quad \cdots, \nu-1, \\
\ro_{k+1}(0) - \ro_k(1) = 0 \quad \text{for } k = 1, \quad \cdots, \nu-1, \\	
\eqcnst{j}(\intpttd)  = 0 \quad \text{for }  j = 1 ,2 , \ldots, q+1, \\
\ineqcnst{i}(\intpttd)  \leq 0 \quad \text{for }  i = 1 ,2 , \ldots, m+\nu,\\
[0, 1]\ni \tau \mapsto \vcont_k(\tau) \in \admcont \quad \text{for } k =1, 2, \ldots, \nu. \\   
\end{cases}
\end{aligned}
\end{equation}   

\begin{remark}
The process given by equation \eqref{e:transformed admissible process} is an admissible process of the transformed problem \eqref{e:TP}, a part of this was observed in Remark \ref{r:transformed constraints}. This claim directly follows from the definition of map \(\F\). Hence, we can construct an admissible process \(\tau \mapsto \admpstd(\tau) = (\ro(\tau), \y(\tau), \z(\tau), \vcont(\tau))\) of \eqref{e:TP} corresponding to each admissible process \(t \mapsto \admps(t) = (\fullstate(t), \cont(t), \intpt)\) of \eqref{e:OCP} via the transformation \(\admpstd = \F(\admps)\), and this \(\admpstd\) is unique provided the \(\z_k\)'s are fixed.
\end{remark}

We adapt Definition \ref{d:LM OCP} of a local minimizer for \eqref{e:TP} to the context of our problem in the following manner:
\begin{definition}
	An admissible process \(\tau \mapsto \optadmpstd = (\optro(\tau), \opty(\tau), \optz(\tau), \optvcont(\tau))\) is a \emph{local minimizer} of \eqref{e:TP} if for some \(\epsilon > 0\) and for every admissible process \(\tau\mapsto \admpstd(\tau) = (\ro(\tau), \y(\tau), \z(\tau), \vcont(\tau))\) satisfying \(\unifnorm{\ro_k - \optro_k} \leq \epsilon\) and \(\unifnorm{\y_k - \opty_k} \leq \epsilon\) for all \(k = 1,\cdots, \nu,\) we have \( \tilde{\J}(\optadmpstd) \leq  \tilde{\J}(\admpstd)\). \label{d:LM TP}
\end{definition} 

\begin{remark}
	Note that \eqref{e:TP} is defined on a fixed interval \([0, 1]\), \(\ro\) plays the role of a state variable resulting in an autonomous transformed system dynamics, \(\intpttd\) consists of just the boundary values of the states \((\ro, \y)\), and the constraints \eqref{e:ro continuity} and \eqref{e:continuity of ts} are also on the boundary points. Therefore, there are no intermediate constraints in \eqref{e:TP} unlike the problem \eqref{e:OCP}, which means it has the same structure as that of the standard problem \eqref{e:SP}. Therefore, we can directly apply Theorem \ref{t:SP} to obtain necessary conditions for optimality in \eqref{e:TP}.         
\end{remark}

\begin{theorem}
	\label{t:TP}
	Consider the optimal control problem \eqref{e:TP}, and refer to the notations introduced in this subsection. If the process \([0, 1] \ni \tau \mapsto \optadmpstd(\tau) = (\optro(\tau), \opty(\tau), \optvcont(\tau), \optz(\tau)) \in \R^{\nu}\times(\R^{d+1})^{\nu}\times(\R^r)^{\nu}\times\R^{\nu}\) is a local minimizer of \eqref{e:TP}, then there exist \(\eta\in\{0, 1\}\) and an absolutely continuous map
	\begin{align*}
		& [0, 1] \ni \tau \mapsto \fullcostatetd_k(\tau) \Let \pmat{  \spcostatetd_k(\tau) \\ \costatetd_k(\tau) \\ \rocostatetd_k(\tau) } \in \R^{\sysDim+2},\\
		& \spcostate_k(\tau) \in \R, \; \costatetd_k(\tau) \in \R^{\sysDim},\; \rocostatetd_k(\tau) \in \R, \;\text{for \( k = 1, \ldots, \nu\)},
	\end{align*}
	with \(\fullcostatetd = ( \fullcostatetd_1, \cdots, \fullcostatetd_{\nu})\), and multipliers \(\alpha \in \R^{q+1}, \beta \in \R^{m+\nu}, \lambda \in \R^{(d+1)(\nu-1)}, \delta \in \R^{\nu-1}\), such that with the Hamiltonian defined by
	\[
		\begin{aligned}
			\tilde{H}^{\eta}(\ro, \y, \vcont, \z, \fullcostatetd) = & \sum_{k=1}^{\nu}  \z_k\biggl(\spcostatetd_k\indic(\vcont_k) +\sys( \ro_k, \yb_k, \vcont_k)^\top\costatetd_k + \rocostatetd_k - \eta \lambda \indic(\vcont_k)\biggr)
		\end{aligned} 
	\]
	for \(\bigl(\ro, \y, \vcont, \z, \fullcostatetd\bigr) \in \R^{\nu}\times(\R^{d+1})^{\nu}\times(\R^r)^{\nu}\times\R^{\nu}\times(\R^{d+2})^{\nu}\), the following conditions hold:
	\begin{enumerate}[label=\textup{(\ref{t:TP}-\alph*)}, leftmargin=*, widest=b, align=left]
		\item \label{t:TP:nt} nontrivality: \( ( \eta, \fullcostatetd(\tau), \alpha, \beta, \lambda, \delta) \neq 0 \quad \text{for a.e } \tau \in [0, 1]\);
		\item \label{t:TP:nn} nonnegativity: \(\beta \geq 0\);
		\item \label{t:TP:cs} complementary slackness: \( \inprod{\beta}{\ineqc(\intpttd\opt)} = 0\);
		\item \label{t:TP:adjeq} the adjoint equation: for a.e.\ \(\tau \in [0, 1]\) and for each \(k = 1, \cdots, \nu\), we have
			\[
				\begin{aligned}
					\begin{cases}
						\dot{\spcostatetd}_k(\tau) = 0, \\
						\dot{\costatetd}_k(\tau) = - \optz_k(\tau)\Bigl(\frac{\partial\sys}{\partial\state}(\optro_k(\tau), \optyb_k(\tau), \optvcont_k(\tau))\Bigr)^\top\costatetd_k(\tau), \\	
						\dot{\rocostatetd}_k(\tau) = - \optz_k(\tau)\Bigl(\frac{\partial\sys}{\partial t}(\optro_k(\tau), \optyb_k(\tau), \optvcont_k(\tau))\Bigr)^\top\costatetd_k(\tau);
					\end{cases}
				\end{aligned}
			\]
	\item \label{t:TP:tv} transversality:
		\[
			\begin{aligned}
				& \begin{cases}
					( \fullcostatetd(0), - \fullcostatetd(1)) \in \eta\nabla\intptcost(\intpttd\opt) + N_{\Et}^L (\intpttd\opt) \implies\\
					\quad \text{for costate corresponding to the sparse state (\(\ysp_k\)):} \\
					\qquad \begin{cases}
						\spcostatetd_1(0) = \alpha_0, \\
						\spcostatetd_k(0) = \lambda^0_{k-1}\quad \text{for } k = 2,\cdots,\nu, \\
						 \spcostatetd_k(1) = \lambda^0_k - \beta_{m+k}(2\ysp_k(1) - \lspcnst{k} -\rspcnst{k}) \quad \text{for } k = 1,\cdots,\nu-1, \\
						\spcostatetd_{\nu}(1) = - \beta_{m+\nu}(2\ysp_{\nu}(1) - \lspcnst{\nu} -\rspcnst{\nu}),
					\end{cases}\\
					\quad \text{for costates corresponding to the main state (\(\yb_k\)):} \\
					\qquad \begin{cases}
						\costatetd_1(0) = \eta\ell_{\yb_1(0)}(\optintpttd) + \big[\frac{\partial \eqc(\optintpttd)}{\partial \yb_1(0)}\big]^\top\alpha + \big[\frac{\partial \ineqc(\optintpttd)}{\partial \yb_1(0)}]^\top\beta, \\
						\costatetd_k(0) = \eta\ell_{\yb_k(0)}(\optintpttd) + \big[\frac{\partial \eqc(\optintpttd)}{\partial \yb_k(0)}\big]^\top\alpha + \big[\frac{\partial \ineqc(\optintpttd)}{\partial \yb_k(0)}\big]^\top\beta + \bar{\lambda}_{k-1}\quad \text{for } k = 2,\cdots,\nu,\\
						\costatetd_k(1) = \bar{\lambda}_k\quad \text{for } k = 1,\cdots,\nu-1,\\
						\costatetd_{\nu}(1) = - \eta\ell_{\yb_{\nu}(1)}(\optintpttd) - \big[\frac{\partial \eqc(\optintpttd)}{\partial \yb_{\nu}(1)}\big]^\top\alpha - \big[\frac{\partial \ineqc(\optintpttd)}{\partial \yb_{\nu}(1)}\big]^\top\beta,\\
					\end{cases}\\
					\quad \text{for costate corresponding to time (\(t/\ro_k\)):} \\
					\qquad \begin{cases}
						\rocostatetd_1(0) = \eta\ell_{\ro_1(0)}(\optintpttd) + \inprod{\alpha}{\frac{\partial \eqc(\optintpttd)}{\partial \ro_1(0)}} + \inprod{\beta}{\frac{\partial \ineqc(\optintpttd)}{\partial \ro_1(0)}}, \\
						\rocostatetd_k(0) = \eta\ell_{\ro_k(0)}(\optintpttd) + \inprod{\alpha}{\frac{\partial \eqc(\optintpttd)}{\partial \ro_k(0)}} + \inprod{\beta}{\frac{\partial \ineqc(\optintpttd)}{\partial \ro_k(0)}} + \delta_{k-1} \quad \text{for }k = 2,\cdots,\nu,\\	
						\rocostatetd_k(1) = \delta_k\quad \text{for } k = 1,\cdots,\nu-1,\\
						\rocostatetd_{\nu}(1) = - \eta\ell_{\ro_{\nu}(1)}(\optintpttd) - \inprod{\alpha}{\frac{\partial \eqc(\optintpttd)}{\partial \ro_{\nu}(1)}} - \inprod{\beta}{\frac{\partial \ineqc(\optintpttd)}{\partial \ro_{\nu}(1)}};
					\end{cases}
				\end{cases}
			\end{aligned}
		\]
	\item \label{t:TP:MP} the Hamiltonian maximum: for a.e.\ \(\tau \in [0, 1]\),
		\begin{align*}
			& (\optvcont_k(\tau), \optz_k(\tau)) \in \argmax_{(v_k,z_k) \in \admcont \times\,]0, +\infty[} \sum_{k=1}^{\nu} \z_k(\tau)\Bigl(\spcostatetd_k(\tau)\indic(\vcont_k(\tau))\\
			& \qquad\qquad\qquad + \sys( \ro_k\opt(\tau), \yb_k\opt(\tau), \vcont_k(\tau))^\top\costatetd_k(\tau) + \rocostatetd_k(\tau) - \eta \lambda \indic(\vcont_k(\tau))\Bigr),\\
			& \optvcont_k(\tau) \in \argmax_{v_k \in \admcont} \Bigl(\spcostatetd_k(\tau)\indic(\vcont_k(\tau))\\
			& \qquad\qquad\qquad + \sys( \ro_k\opt(\tau), \yb_k\opt(\tau), \vcont_k(\tau))^\top\costatetd_k(\tau) + \rocostatetd_k(\tau) - \eta \lambda \indic(\vcont_k(\tau))\Bigr);
		\end{align*}
	\item \label{t:TP:hmc}the Hamiltonian constancy: for a.e.\ \(\tau \in [0 ,1]\),
		\[
			\spcostatetd_k(\tau)\indic(\vcont_k\opt(\tau)) + \sys( \ro_k\opt(\tau), \yb_k\opt(\tau), \vcont_k\opt(\tau))^\top\costatetd_k(\tau) + \rocostatetd_k(\tau) - \eta \lambda \indic(\vcont_k\opt(\tau)) = 0.
		\]
	\end{enumerate}
\end{theorem}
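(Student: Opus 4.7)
The plan is to apply Theorem \ref{t:SP} to \eqref{e:TP} directly, exactly as the remark preceding the statement already suggests: after the time rescaling, \eqref{e:TP} is posed on the fixed interval $[0,1]$, its dynamics are autonomous because the original time $t$ has been absorbed into the new state variable $\ro_k$, and every constraint that originally involved an interior time instant has been pushed to one of the two boundaries $\tau=0$ or $\tau=1$. First I would stack the $\nu$ sub-systems into a single augmented system on $[0,1]$ with combined state $(\y,\ro)\in(\R^{d+1})^{\nu}\times\R^{\nu}$ and combined control $(\vcont,\z)\in(\R^r)^{\nu}\times\,]0,+\infty[^{\nu}$; the linking equalities $\y_{k+1}(0)-\y_k(1)=0$ and $\ro_{k+1}(0)-\ro_k(1)=0$ for $k=1,\ldots,\nu-1$, together with $\eqcnst{j}(\intpttd)=0$ and $\ineqcnst{i}(\intpttd)\leq 0$, jointly define the target set $\tilde E$ for the combined vector of boundary values. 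Since the regularity of the data is inherited from \eqref{e:OCP}, this augmented problem falls squarely under the hypotheses of the nonsmooth PMP \cite[Theorem 22.26]{ref:Cla-13}.

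The adjoint equations in \ref{t:TP:adjeq} then drop out mechanically by differentiating the block Hamiltonian $\tilde{H}^{\eta}$ with respect to $(\yb_k,\ro_k)$ on each block; in particular $\dot{\spcostatetd}_k\equiv 0$ because $\tilde{H}^{\eta}$ involves $\ysp_k$ only through the dynamics $\yspdot_k=\z_k\indic(\vcont_k)$, which is itself independent of $\ysp_k$. The main piece of careful bookkeeping is the translation of the transversality inclusion $(\fullcostatetd(0),-\fullcostatetd(1))\in\eta\nabla\intptcost(\optintpttd)+N^L_{\tilde E}(\optintpttd)$ into the per-block equalities of \ref{t:TP:tv}. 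Each linking equality $\y_{k+1}(0)-\y_k(1)=0$ introduces a Lagrange multiplier in $\R^{d+1}$, which after splitting into its sparse-state component $\lambda^0_k\in\R$ and its main-state component $\bar{\lambda}_k\in\R^d$ is precisely what appears in $\spcostatetd_{k+1}(0)$, $\costatetd_{k+1}(0)$, $\spcostatetd_k(1)$ and $\costatetd_k(1)$; analogously $\delta_k\in\R$ handles the $\ro_{k+1}(0)-\ro_k(1)=0$ constraint. Combining these with the standard generators of $N^L_{\tilde E}$ coming from $\eqcnst{j}$, $\ineqcnst{i}$ and the inequality constraints \eqref{e:inequality const spstd} on the sparse state reproduces the boundary formulas verbatim.

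The Hamiltonian maximisation \ref{t:TP:MP} follows at once from Theorem \ref{t:SP} because the combined admissible control set is a Cartesian product; the separability of $\tilde{H}^{\eta}$ across the blocks indexed by $k$, together with the strict positivity of $\z_k$, immediately implies the pointwise $\vcont_k$-only maximisation recorded on the second line. The one genuinely non-routine step, and the one I expect to be the main obstacle, is identifying the constant in the Hamiltonian constancy relation as $0$ in \ref{t:TP:hmc}: Theorem \ref{t:SP} only asserts $\tilde{H}^{\eta}\equiv h$ for some $h\in\R$. I would resolve this by exploiting the unboundedness of the auxiliary control: since each $\z_k$ ranges over the open half-line $\,]0,+\infty[$ and $\tilde{H}^{\eta}$ is linear in $\z_k$, finiteness of the joint maximum in \ref{t:TP:MP} forces the bracketed coefficient of each $\z_k$ to vanish almost everywhere along the optimum; summing over $k$ then pins the constant to $h=0$, which is exactly \ref{t:TP:hmc}.
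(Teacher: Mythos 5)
Your proposal follows essentially the same route as the paper: Theorem \ref{t:TP} is obtained there by viewing \eqref{e:TP} as a standard fixed-interval, autonomous problem of the form \eqref{e:SP} (with the linking equalities \eqref{e:ro continuity} and \eqref{e:continuity of ts} absorbed into the boundary target set and generating the multipliers $\lambda$ and $\delta$) and invoking Theorem \ref{t:SP}, with the vanishing of the Hamiltonian deduced, exactly as you argue, from the linear and separate appearance of the positive scaling controls $z_k$. Your write-up supplies somewhat more bookkeeping detail than the paper's one-remark justification, but the approach is the same.
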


\begin{remark}
	The control variables \(\z_k\)'s enter \(\tilde{H}^{\eta}\) defined in Theorem \ref{t:TP} linearly and separately, and therefore, the \(\vcont_k\opt\)'s do not depend on the \(\z_k\opt\)'s, and the maximum condition on \(\z_k\), i.e., \(\frac{\partial\tilde{H}^{\eta}}{\partial \z_k\opt} = 0\), along with the Hamiltonain constancy condition \ref{t:SP:hmc} imply \ref{t:TP:hmc}. This says that the \(\z_k\opt\)'s play the role of scaling factors and have no impact on the optimal control actions \(\vcont_k\opt\)'s.
\end{remark}

\subsubsection*{Simplification of Theorem \eqref{t:TP}}
The non triviality condition \ref{t:TP:nt} follows the lemma:
\begin{lemma}
	\label{l:reduction of triviality condition}
	The non triviality condition \ref{t:TP:nt}, i.e., \eqref{e:nt1} below, is equivalent to condition \eqref{e:nt2}:
	\begin{align}
		\label{e:nt1} \tag{$C_1$} & (\eta, \fullcostatetd(\tau), \alpha, \beta, \lambda, \delta) \neq 0 \quad \text{for a.e } \tau \in [0, 1],\\
		\label{e:nt2} \tag{$C_2$} & (\eta, \fullcostatetd(\tau), \alpha, \beta) \neq 0 \quad \text{for a.e } \tau \in [0, 1].
	\end{align} 
\end{lemma}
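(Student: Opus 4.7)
The plan is to prove the equivalence by noting that one direction is immediate and the other is a short extraction of information from the transversality conditions in Theorem \ref{t:TP}. The implication \eqref{e:nt2} $\Rightarrow$ \eqref{e:nt1} is trivial since the tuple in \eqref{e:nt2} is a subtuple of the tuple in \eqref{e:nt1}; if the shorter tuple is nonzero then the longer one containing it is nonzero as well.

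For the converse direction \eqref{e:nt1} $\Rightarrow$ \eqref{e:nt2}, I would argue by contrapositive. Assume that \eqref{e:nt2} fails, meaning $\eta = 0$, $\fullcostatetd(\tau) = 0$ for a.e.\ $\tau \in [0,1]$, $\alpha = 0$, and $\beta = 0$. The goal is to show that then $\lambda = 0$ and $\delta = 0$ as well, so the full tuple of \eqref{e:nt1} also vanishes, contradicting \eqref{e:nt1}. The mechanism is that the multipliers $\lambda$ and $\delta$ associated with the continuity constraints \eqref{e:continuity of ts} and \eqref{e:ro continuity} at the joining points enter the transversality conditions linearly and are algebraically expressed in terms of the costate endpoint values and the multipliers $\alpha, \beta$.

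Concretely, I would split $\lambda = (\lambda^0_k, \bar{\lambda}_k)_{k=1}^{\nu-1}$ according to the splitting $\y_k = (\ysp_k, \yb_k^\top)^\top$ and read off from \ref{t:TP:tv} the endpoint identities at $\tau = 1$ for each $k = 1,\ldots,\nu-1$: the main-state costate gives $\costatetd_k(1) = \bar{\lambda}_k$, the time costate gives $\rocostatetd_k(1) = \delta_k$, and the sparse-state costate gives $\spcostatetd_k(1) = \lambda^0_k - \beta_{m+k}\bigl(2\ysp_k(1) - \lspcnst{k} - \rspcnst{k}\bigr)$. Under the standing assumption $\fullcostatetd(\tau) = 0$ a.e.\ and $\beta = 0$, the left-hand sides all vanish and the second term in the last identity drops out, forcing $\bar{\lambda}_k = 0$, $\delta_k = 0$, and $\lambda^0_k = 0$ for every $k = 1,\ldots,\nu-1$.

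Since this is a short algebraic deduction from \ref{t:TP:tv}, there is no real obstacle to overcome; the only thing to check with some care is that the full range of indices $k$ for the continuity-related multipliers is covered by the endpoint conditions at $\tau = 1$ (rather than needing also the conditions at $\tau = 0$), and that the contribution of $\beta$ in the sparse-state transversality equation does indeed vanish once $\beta = 0$ is assumed. Combining these yields $(\lambda,\delta) = 0$ under the hypothesis that \eqref{e:nt2} fails, which together with $\eta = 0$, $\fullcostatetd \equiv 0$, $\alpha = 0$, $\beta = 0$ contradicts \eqref{e:nt1} and completes the proof.
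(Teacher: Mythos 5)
Your proposal is correct and follows essentially the same argument as the paper: the trivial inclusion in one direction, and a contradiction/contrapositive in the other that reads $\lambda$ and $\delta$ off the transversality conditions \ref{t:TP:tv} once $(\eta,\fullcostatetd,\alpha,\beta)=0$ is assumed. The only cosmetic difference is that the paper extracts $\lambda^0_k$ from the $\tau=0$ identity $\spcostatetd_{k+1}(0)=\lambda^0_k$ (which needs no appeal to $\beta=0$), whereas you use the $\tau=1$ identity and correctly note that the $\beta_{m+k}$ term drops; both readings yield the same conclusion.
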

\begin{proof}
	If \eqref{e:nt2} holds, then \eqref{e:nt1} obviously holds. Let us prove the implication \eqref{e:nt1} $\Rightarrow$ \eqref{e:nt2} by contradiction, so suppose that \eqref{e:nt1} hold but \eqref{e:nt2} does not. Then \((\eta, \fullcostatetd(\tau), \alpha, \beta) = 0\) for all \(\tau \in [0, 1]\). By the transversality condition \eqref{t:TP:tv} we have \(\lambda^0_k = \spcostatetd_{k+1}(0) = 0, \bar{\lambda}_k = \costatetd_{k}(1) = 0\) and \(\delta_k = \rocostatetd_k(1) = 0\), which imply \(\lambda = 0, \delta = 0\), and this violates \eqref{e:nt1}.
\end{proof}

\begin{remark}
	\label{r:reduction of transversality condition}
	Note that the multipliers \(\lambda_k\) and \(\delta_k\) appear in only a few terms of the transversality conditions, and without loss of generality, these conditions can be recast by eliminating \(\lambda_k\) and \(\delta_k\) for each \(k = 1,2, \ldots, \nu-1\), in the following way:
	\begin{equation}
		\label{e:Ntc}
		\begin{aligned}
			\begin{cases}
				\text{for costate corresponding to sparse state \((\ysp_k)\)} \\
				\qquad \spcostatetd_{k+1}(0) - \spcostatetd_k(1) =   \beta_{m+k}(2\ysp_k(1) - \lspcnst{k} -\rspcnst{k}),\\
				\text{for costates corresponding main state \((\yb_k)\)}\\
				\qquad \costatetd_{k+1}(0) - \costatetd_k(1) = \eta\ell_{\yb_{k+1}(0)}(\optintpttd) + \big[\frac{\partial \eqc(\optintpttd)}{\partial \yb_{k+1}(0)}\big]^\top\alpha + \big[\frac{\partial \ineqc(\optintpttd)}{\partial \yb_{k+1}(0)}\big]^\top\beta,\\
				\text{for costate corresponding to time \((t/\ro_k)\)}\\
				\qquad \rocostatetd_{k+1}(0) -  \rocostatetd_k(1) = \eta\ell_{\ro_{k+1}(0)}(\optintpttd) + \inprod{\alpha}{\frac{\partial \eqc(\optintpttd)}{\partial \ro_{k+1}(0)}} + \inprod{\beta}{\frac{\partial \ineqc(\optintpttd)}{\partial \ro_{k+1}(0)}}.
			\end{cases}
		\end{aligned}
	\end{equation}
\end{remark}

                                                    
\subsection{Equivalence}
\label{ss:equivalence}                     
Similar to the forward transformation map \(\F\), we define a backward transformation map $\G$ to transform any arbitrary admissible process \(\admpstd\) of \eqref{e:TP} to an admissible process of \eqref{e:OCP}. 

Firstly, let \(\tau\mapsto \admpstd(\tau) \Let (\ro(\tau), \y(\tau), \z(\tau), \vcont(\tau))\) be an arbitrary admissible process of \eqref{e:TP}. Then, by definition, \(\ro_k\) is monotone and strictly increasing on the interval \([0, 1]\) and consequently is injective. So, its inverse function exists on \(\ro_k([0, 1])\), is also monotone, strictly increasing, and bounded on the corresponding interval. The intermediate time instants are defined by
\begin{equation}
	\label{e:intermediate time instants}                   
	t_{k-1} \Let \ro_k(0) \quad \text{for } k =1, \ldots, \nu, \text{ and } t_{\nu} \Let \ro_{\nu}(1),                                       
\end{equation} 
and corresponding time intervals by \(\Delta_k \Let [t_{k-1}, t_k]\). On each interval \(\Delta_k = [t_{k-1}, t_k]\), we define the inverse map
\begin{equation}
	\label{e:inverse map}
	[t_{k-1}, t_k] \ni t \mapsto \tau(t) \Let \ro_k\inverse(t) \in [0, 1]
\end{equation}    
satisfying the dynamics:
\[
	\frac{\dd\tau(t)}{\dd t} \Let \frac{1}{z_k(\tau(t))}, \quad \tau(t_{k-1}) = 0, \quad \tau(t_{k}) = 1.
\]

Secondly, on each interval \(\Delta_k\) we transform the state \(\tau \mapsto \y_k(\tau)\) and the control \(\tau \mapsto \vcont_k(\tau)\) trajectories by defining new maps
\begin{equation}
	\label{e:G transformed state}
	[t_{k-1}, t_k] \ni t \mapsto \fullstate(t) \Let \y_k(\ro_k\inverse(t)) \in \R^{d+1}
\end{equation}
and
\begin{equation}
	\label{e: G transformed control}
	[t_{k-1}, t_k] \ni t \mapsto \cont(t) \Let \vcont_k(\ro_k\inverse(t)) \in \R^r. 
\end{equation}
The resulting intermediate states can be written as
\begin{equation}
	\label{e:g intst}
	\fullstate(t_{k-1}) = \y_k(0)\quad \text{for } k= 1, 2, \ldots, \nu, \text{ and}\quad\fullstate(t_{\nu}) = \y_{\nu}(1),
\end{equation}
with the vector of intermediate points being
\[
	\intpt(t, \fullstate) = \bigl((\tinit,\fullstate(\tinit)), (\tintm{1}, \fullstate(\tintm{1})), \ldots, (\tfin, \fullstate(\tfin))\bigr).\\
\] 
From \eqref{e:intermediate time instants} and \eqref{e:g intst} we get \(\intpt(t, \fullstate) = \intpttd(\ro, \y)\). This completes the construction of the process \(t\mapsto \admps(t) = (\fullstate(t), \cont(t), \intpt)\) and we denote this transformation by \(\admps = \G(\admtps)\). Further, for any given \(\admpstd\), the map \(\tau\mapsto \z(\tau)\) is fixed; therefore, the inverse map \eqref{e:inverse map} is unique, resulting in the uniqueness of the backward transformation map \(\G\).

Now we will establish that the process \(\admps\) obtained above is indeed an admissible process of \eqref{e:OCP}. As noted above, we have \(\intpt = \intpttd\). Therefore, on similar lines as that of Remark \ref{r:transformed constraints}, \(\intpt\) satisfies all the constraints of \eqref{e:OCP}, and since the function \(\tau \mapsto \y_k(\tau)\) is uniformly continuous on \([0, 1]\), the function \(t \mapsto \fullstate(t)\) is uniformly continuous on each interval \(\Delta_k = [\tintm{k-1}, \tintm{k}]\) and almost everywhere on \(\Delta_k\), it satisfies the differential equations
\begin{equation}
	\label{e:retransformed dynamics}
	\begin{cases}
		\begin{aligned} 
			&  \frac{\dd \spstate(t)}{\dd t} =  \frac{\dd \ysp_k(\tau)}{\dd \tau} \frac{\dd \tau(t)}{\dd t} = \indic(u(t)), \\
			&  \frac{\dd \state(t)}{\dd t} =  \frac{\dd \yb_k(\tau)}{\dd \tau} \frac{\dd \tau(t)}{\dd t} = \sys(t, \state(t), \cont(t)).             
		\end{aligned}
	\end{cases}
\end{equation}  
Continuity of \(\fullstate\) at intermediate time instants follows directly form the continuity constraints \eqref{e:continuity of ts}. Since \(\vcont_k \in \admcont\) a.e., we have \(\cont \in \admcont\) a.e. Therefore, the process \(t\mapsto \admps(t) = (\fullstate(t), \cont(t), \intpt)\) is an admissible process of \eqref{e:OCP}.

\begin{remark}
	\label{r:FGmap}
	Note that the functions \(\ro_k, \y_k, \z_k\), and \(\vcont_k\) are defined on \(\tau \in [0, 1]\) and correspond to \(\Delta_k = [t_{k-1}, t_k]\). In order to keep track of entire trajectory, we define on the interval \([0, \nu]\), corresponding to \( \Delta = [t_0, t_{\nu}]\), the following functions: for \(\s \in [k-1, k], k=1,\cdots,\nu,\)
	\begin{equation}
		\label{e:Auf}
		\begin{aligned}  
			\begin{cases}
				\auxp(\s) = \ro_k(\s - k +1) = t_{k-1} + \int_{k-1}^{\s} \z_k(\s - k + 1) \dd \s,\\
				\auxy(\s) = \y_k(\s - k +1) =  \fullstate(\ro_k(\s - k +1)), \\
				\auxv(\s) = \vcont_k(\s - k +1) =  \cont(\ro_k(\s - k +1)).
			\end{cases}
		\end{aligned}
	\end{equation}
	Since the intervals \(\Delta_k\) are concatenated consecutively, and the fact that there are continuity constraints on \(\ro_k\) \& \(\y_k\), the functions \(s\mapsto \auxp(\s)\) and \(s\mapsto \auxy(\s)\) are continuous, \(\auxp\) is strictly increasing, i.e., \( \frac{\dd \auxp(\s)}{\dd \s} \geq c > 0\) and hence inverse function exists, which is also strictly increasing in nature. We define the inverse function \( [\tinit, \tfin] \ni t \mapsto \auxpinv(t) \Let \auxp\inverse(t) \in [0, \nu] \), and for \( \z_k = \abs{\Delta_k}\), we define
	\[
		\auxpinv(t) = k-1 + \frac{t- t_{k-1}}{\abs{\Delta_k}}\quad \text{for a.e. } t \in [t_{k-1}, t_k].
	\]
	Using these functions, we can represent the process \( \admps = \G(\admtps)\) in the form  \(t\mapsto\admps(t) = (\fullstate(t), \cont(t), \intpt) \), where 
	\begin{equation}
		\label{e:Gmap}
		\fullstate(t) = \auxy(\auxpinv(t)), \quad  \cont(t) = \auxv(\auxpinv(t)).
	\end{equation}
\end{remark}

\begin{remark}
	\label{r:same cost}
	The value of the objective function remains constant under the aforementioned transformations since any admissible process of one problem is mapped to another via both the transformations \( \F \) and \( \G \), i.e., \( \tilde{\J}(\admtps) =  \J(\admps)\) for the transformation \( \admps = \G(\admtps)\) or \( \admtps = \F(\admps)\). This happens because in the transformation from \eqref{e:OCP} to \eqref{e:TP} only trajectories are being transformed from the $t$ time domain to the \(\tau\) time domain, but both problems are the same. Moreover, the map \(\G\circ\F\) is identity while \(\F\circ\G\) is not.
\end{remark}

\begin{theorem}[Equivalence]
	\label{t:equivalence}
	If the process \(t\mapsto \optadmps(t) \Let (\optfullstate(t), \optcont(t), \optintpt(t))\) is a local minimizer of \eqref{e:OCP}, then the process \(\optadmpstd = \F(\optadmps)\) is a local minimizer of the \eqref{e:TP}. Conversely, if the process \(\tau\mapsto\optadmpstd(\tau) = (\optro(\tau), \opty(\tau), \optz(\tau), \optvcont(\tau))\) is a local minimizer of \eqref{e:TP}, then the process \(\optadmps = \G(\optadmpstd)\) is a local minimizer of the \eqref{e:OCP}.
\end{theorem}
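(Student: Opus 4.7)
\emph{Proof plan.} The argument rests on combining three facts already in hand: cost preservation \(\tilde{\J}(\admpstd) = \J(\admps)\) whenever the two processes are related by \(\F\) or \(\G\) (Remark \ref{r:same cost}), admissibility preservation under both maps (verified throughout \S\ref{ss:equivalence}), and the identity \(\G \circ \F = \mathrm{id}\). Once these are available, the theorem reduces entirely to showing that \(\F\) and \(\G\) are continuous with respect to the uniform topologies entering Definitions \ref{d:LM OCP} and \ref{d:LM TP}, which is the only substantive content remaining.

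For the forward implication, fix a local minimizer \(\optadmps\) of \eqref{e:OCP} with radius \(\epsilon\), set \(\optadmpstd \Let \F(\optadmps)\) (pinning down a reference scaling \(\optz\)), and let \(\admpstd\) be any admissible process of \eqref{e:TP} with \(\unifnorm{\ro_k - \optro_k}\) and \(\unifnorm{\y_k - \opty_k}\) below some threshold \(\tilde{\epsilon}\) to be chosen. Set \(\admps \Let \G(\admpstd)\); its admissibility in \eqref{e:OCP} was established in \S\ref{ss:equivalence}. From \eqref{e:intermediate time instants} one obtains \(|\tintm{k} - \opttintm{k}| = |\ro_{k+1}(0) - \optro_{k+1}(0)| \le \tilde{\epsilon}\) immediately. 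The remaining inequality \(\unifnorm{\fullstate - \optfullstate} \le \epsilon\) I would control on the intersection of the two versions of \(\Delta_k\) via the decomposition
\[
\fullstate(t) - \optfullstate(t) = \bigl[\y_k(\ro_k\inverse(t)) - \opty_k(\ro_k\inverse(t))\bigr] + \bigl[\opty_k(\ro_k\inverse(t)) - \opty_k(\optro_k\inverse(t))\bigr],
\]
bounding the first bracket by \(\unifnorm{\y_k - \opty_k}\) and the second by the modulus of continuity of \(\opty_k\) applied to a Lipschitz estimate on \(\ro_k\inverse - \optro_k\inverse\) that exploits the essential lower bound \(\optz_k \ge c > 0\). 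Shrinking \(\tilde{\epsilon}\) so that both pieces stay below \(\epsilon\), optimality of \(\optadmps\) together with cost preservation gives \(\tilde{\J}(\optadmpstd) \le \tilde{\J}(\admpstd)\).

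The converse direction runs symmetrically but requires additional care in the choice of scaling used by \(\F\), since \(\F \circ \G\) is \emph{not} the identity. Given a local minimizer \(\optadmpstd\) of \eqref{e:TP} with radius \(\tilde{\epsilon}\) and scaling \(\optz\), set \(\optadmps \Let \G(\optadmpstd)\). For any admissible \(\admps\) of \eqref{e:OCP} close to \(\optadmps\), apply \(\F\) with the rescaled choice \(\z_k(\tau) \Let \optz_k(\tau)\cdot (\tintm{k}-\tintm{k-1})/(\opttintm{k}-\opttintm{k-1})\), selected precisely so that \(\F(\optadmps) = \optadmpstd\) and so that \(\F\) depends continuously on \(\admps\) near \(\optadmps\). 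Then \(\unifnorm{\ro_k - \optro_k}\) is bounded by the intermediate-time gap plus a term of order \(\abs{(\tintm{k}-\tintm{k-1})-(\opttintm{k}-\opttintm{k-1})}\int_0^1\optz_k\), while each difference \(\y_k(\tau) - \opty_k(\tau) = [\fullstate(\ro_k(\tau)) - \optfullstate(\ro_k(\tau))] + [\optfullstate(\ro_k(\tau)) - \optfullstate(\optro_k(\tau))]\) splits into an OCP-uniform-norm piece plus a modulus-of-continuity piece for \(\optfullstate\). Choosing the OCP-neighborhood radius small enough that both contributions fall below \(\tilde{\epsilon}\), optimality of \(\optadmpstd\) and cost preservation deliver \(\J(\optadmps) \le \J(\admps)\).

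The main technical obstacle is the comparison of trajectories living on distinct time intervals \([\tintm{k-1},\tintm{k}]\) versus \([\opttintm{k-1},\opttintm{k}]\): every estimate must pass through a modulus of continuity of the optimal trajectory, and in the forward direction also through a Lipschitz bound on \(\ro_k\inverse - \optro_k\inverse\) that is only available when \(\z_k\) is uniformly bounded away from zero within the admissible neighborhood. Justifying that latter lower bound---implicit in the regularity assumptions of \cite{ref:DmiKag-11} but needing explicit verification here because our cost is discontinuous in \(\cont\)---is the step I would plan most carefully, as it is the one genuine point of departure from the smooth setting treated there.
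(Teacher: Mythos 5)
Your proposal is correct, and its technical core --- cost preservation (Remark \ref{r:same cost}), admissibility preservation under \(\F\) and \(\G\), and the triangle-inequality split of \(\fullstate-\optfullstate\) into a uniform-norm piece plus a modulus-of-continuity piece controlled through the inverse time change with \(\z_k\ge c>0\) --- is exactly the paper's estimate \eqref{e:LM proof}. Where you genuinely differ is in the logical bookkeeping, and your version is the more careful one. The paper proves only the backward implication, and does so by showing that \(\G\) carries a \eqref{e:TP}-neighbourhood of \(\optadmpstd\) into an \eqref{e:OCP}-neighbourhood of \(\optadmps\) with the cost inequality preserved; read literally, this yields \(\J(\optadmps)\le\J(\admps)\) only for those \(\admps\) lying in the image of \(\G\), and one still has to know that every \eqref{e:OCP}-admissible process near \(\optadmps\) arises that way. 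You deploy the same continuity-of-\(\G\) estimate where its quantifiers actually fit, namely the forward implication (since \(\G\circ\F=\mathrm{id}\) pins \(\G(\optadmpstd)=\optadmps\)), and for the converse you push an arbitrary nearby \eqref{e:OCP}-process forward with \(\F\), rescaling \(\z_k\) by \((\tintm{k}-\tintm{k-1})/(\opttintm{k}-\opttintm{k-1})\) so that the reference process maps exactly to \(\optadmpstd\) --- precisely the non-obvious step (forced by \(\F\circ\G\neq\mathrm{id}\)) that the paper hides behind ``can be proven in a similar way.'' Your approach therefore buys a complete two-sided argument with the quantifiers in the right order, at the cost of having to fix the scaling explicitly. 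One small correction: the uniform lower bound \(\z_k\ge c>0\), which the paper simply asserts in Remark \ref{r:FGmap}, concerns the time-reparametrisation controls and is unaffected by the discontinuity of the running cost in \(\cont\); your instinct to verify it is right, but it is not a nonsmoothness issue.
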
 
\begin{proof}
	Here we prove only the backward implication: if the process \(\tau\mapsto\optadmpstd(\tau)\) is a local minimizer of \eqref{e:TP}, then the process \(\optadmps = \G(\optadmpstd)\) is a local minimizer of \eqref{e:OCP}; the forward implication can be proven in a similar way. From Definition \ref{d:LM OCP} and \ref{d:LM TP} of local minimizer, it suffices to show that for some \(\epsilon > 0\) and for every admissible process \(\tau\mapsto\admpstd(\tau) = (\ro(\tau), \y(\tau), \z(\tau), \vcont(\tau))\) satisfying
	\begin{equation}
		\label{c:LM constraints}
		\begin{aligned}
			\begin{cases}
				\unifnorm{\ro_k - \optro_k} \leq \epsilon, \quad \unifnorm{\y_k - \opty_k} \leq \epsilon, \quad \text{for } k = 1,\cdots, \nu,\\
				\tilde{\J}(\optadmpstd) \leq  \tilde{\J}(\admpstd),
			\end{cases}
		\end{aligned}
	\end{equation}
	the corresponding admissible process \(t\mapsto (\fullstate(t), \cont(t), \intpt(t)) = \admps(t) = \G(\admpstd(\tau))\) and the process \(t\mapsto (\optfullstate(t), \optcont(t), \optintpt(t)) = \optadmps(t) = \G(\optadmpstd(\tau))\) for some \(\epsilon_1 > 0\) satisfies:
	\[
		\unifnorm{ \fullstate - \optfullstate} \leq \epsilon_1, \quad \abs{\tintm{k} - \opttintm{k}} \leq \epsilon_1 \quad \text{for } k = 1, 2, \ldots, \nu, 
	\]
	and
	\[
		\abs{t_k - t_k^*} = \abs{\ro_k(1) -\optro_k(1)} \leq \underset{\tau \in [0, 1]}\max \abs{\ro_k(\tau) -\optro_k(\tau)} = \unifnorm{\ro_k -\optro_k} \leq \epsilon_1,
	\]
	we have \(\J(\optadmps) \leq \J(\admps)\).   

	Using Remark \ref{r:FGmap} we can construct, for a.e.\ \(t \in \Delta\) and \(\s \in [0, \nu]\), functions \(t\mapsto \auxpinv(t)\) and \(\tau\mapsto\auxy(\tau)\) and the functions \(t\mapsto\optauxpinv(t)\) and \(\tau\mapsto\optauxy(\tau)\) for each admissible process \(\admpstd\) and the process \(\optadmpstd\). Moreover, from \eqref{e:Gmap} we have \(\fullstate(t) = \auxy(\auxpinv(t))\) and \(\optfullstate(t) = \optauxy(\auxpinv(t))\). Therefore, the proof of \(\unifnorm{ \fullstate - \optfullstate} \leq \epsilon_1\) is identical to the proof of \(\unifnorm{\auxy(\auxpinv) - \optauxy(\optauxpinv)} \leq \epsilon_1.\)     

	The inequality \( \unifnorm{\y_k - \opty_k} \leq \epsilon\) on the interval \([0, 1]\) implies that \( \unifnorm{\auxy - \optauxy} \leq \epsilon\) on each interval \([k-1, k]\), and therefore on the entire interval \([0, \nu]\). Similarly, the inequality \(\unifnorm{\ro_k - \optro_k} \leq \epsilon\) implies that \(\unifnorm{\auxp - \optauxp} \leq \epsilon\) on the entire interval \([0, \nu]\); consequently, its inverse map satisfies \(\unifnorm{\auxpinv - \optauxpinv} \leq \epsilon_2(\epsilon)\) on the interval \(\Delta\cap\Delta^*\). With these we prove that \(\unifnorm{\auxy(\auxpinv) - \optauxy(\optauxpinv)} \leq \epsilon_1\) holds as follows:
	\begin{equation}
		\label{e:LM proof}
		\begin{aligned}
			\begin{cases}
				\unifnorm{\auxy(\auxpinv) - \optauxy(\optauxpinv)} & =\underset{t\in\Delta\cap\Delta^*}\max\abs{\auxy(\auxpinv(t)) - \optauxy(\optauxpinv(t))}\\
				&  \leq \underset{t\in\Delta\cap\Delta^*}\max\abs{\auxy(\auxpinv(t)) - \optauxy(\auxpinv(t))} + \underset{t\in\Delta\cap\Delta^*}\max\abs{\optauxy(\auxpinv(t)) - \optauxy(\optauxpinv(t))}\\
				&  = \unifnorm{\auxy - \optauxy}  +   \underset{t\in\Delta\cap\Delta^*}\max\abs{\optauxy(\auxpinv(t)) - \optauxy(\optauxpinv(t))}.
			\end{cases}
		\end{aligned}
	\end{equation}
	The first term on the right-hand side of \eqref{e:LM proof} is bounded by epsilon, i.e.,  \(\unifnorm{\auxy - \optauxy} \leq \epsilon\). Further, \(\optauxy\) is uniformly continuous on \([0, \nu]\), and \(\unifnorm{\auxpinv - \optauxpinv} \leq \epsilon_2(\epsilon)\) on the interval \(\Delta\cap\Delta^*\). Therefore, the second term is also bounded by some \(\delta(\epsilon_2(\epsilon)) > 0 \). Consequently,
	\begin{equation} 
		\unifnorm{\auxy(\auxpinv) - \optauxy(\optauxpinv)}  \leq \epsilon + \delta(\epsilon_2(\epsilon)) = \epsilon_1 \implies \unifnorm{ \fullstate - \optfullstate} \leq \epsilon_1.\\
	\end{equation}
	From \eqref{c:LM constraints} and Remark \ref{r:same cost} we get
	\begin{equation}
		\J(\optadmps) = \tilde{\J}(\optadmpstd) \leq \tilde{\J}(\admtps) = \J(\admps)  \quad \implies \J(\optadmps) \leq \J(\admps).
	\end{equation}
	This completes the proof.       
\end{proof} 

\subsection{Characterization of necessary conditions for \eqref{e:OCP}}
\label{ss:characterization}
In this subsection we describe the procedure to obtain the main result given by Theorem \ref{t:OCP}, using analysis done in this section \secref{s:Proof}.

Let \(\tau\mapsto\optadmpstd(\tau) \Let (\optro(\tau), \opty(\tau), \optvcont(\tau), \optz(\tau))\) be a local minimizer of \eqref{e:TP} satisfying the conditions of Theorem \ref{t:TP}. In view of Theorem \ref{t:equivalence} we construct a unique local minimizer \(\optadmps= \G(\optadmpstd) = (\optfullstate, \optcont, \optintpt)\) of \eqref{e:OCP}. Since the map \(\G\) described in \secref{ss:equivalence} simply transforms trajectories from the \(\tau\) domain to the \(t\) domain, we obtain the necessary conditions for the minimizer \(\optadmps\) by transforming the necessary conditions given by Theorem \ref{t:TP}. Moreover, Lemma \ref{l:reduction of triviality condition} along with Remark \ref{r:reduction of transversality condition} can be directly applied to get rid of the \(\lambda\) and the \(\delta\) terms. To transform the adjoint states \(\fullcostatetd\), we define, for a.e.\ \(t \in [t_{k-1}\opt, t_k\opt]\) and for each \(k = 1, 2, \ldots, \nu\), the map
\begin{equation}
	[\optt_0, \optt_{\nu}] \ni t \mapsto \fullcostate(t) = \pmat{\spcostate(t) \\ \costate(t) \\ \rocostate(t)} =  \pmat{\spcostatetd_k(\ro_k\inverse(t)) \\ \costatetd_k(\ro_k\inverse(t)) \\ \rocostatetd_k(\ro_k\inverse(t))}, \, \spcostate(t) \in \R^1, \, \rocostate(t) \in \R^{d}, \, \rocostate(t) \in \R^1,
\end{equation}
which follows, for a.e. \(t \in [\tinit\opt, \tfin\opt]\), the dynamics
\begin{equation}
	\left\{
	\begin{aligned}
		& \frac{\dd \spcostate(t)}{\dd t} = \spcostatetddot_k(\tau\opt)\frac{\dd \opttau}{\dd t} = 0\\
		& \frac{\dd \costate(t)}{\dd t} = \dot \costatetd_k\frac{\dd \opttau}{\dd t} = -[\sys_{\state}(t, \optstate, \optcont)]^\top\costate(t)\\
		& \frac{\dd \rocostate(t)}{\dd t} = \dot \rocostatetd_k(\tau\opt) \frac{\dd \opttau}{\dd t} = -[\sys_t(t, \optstate, \optcont)]^\top\costate(t).
	\end{aligned}
	\right.
\end{equation} 
Absolute continuity of \(\fullcostatetd_k\) on \([0, 1]\) implies absolute continuity of \(\fullcostate\) on each interval \( \Delta_k\opt = [t_{k-1}\opt, t_k\opt]\). Let us define the function
\[
	H^{\eta}(\fullcostate, t, \fullstate, \cont) =  \spcostate\indic(u) + \inprod{\costate}{\sys(t, \state, \cont)} + \rocostate - \eta \lambda \indic(u) 
\]
for \((p, t, x, u)\in\R^d\times\R\times\R^d\times\admcont\). Then the \textit{maximum condition} \ref{t:TP:MP} can be equivalently written as: for a.e.\ \(t \in [\tinit, \tfin]\) we have 
\[
	\optcont(t) \in \argmax_{v \in \admcont} H^{\eta}(\fullcostate(t), t, \optfullstate(t), v),
\]
and the \textit{Hamiltonian constancy condition} \ref{t:TP:hmc} becomes
\begin{equation}
	\label{e:Ham constancy}
	H^{\eta}(\fullcostate(t), t, \optfullstate(t), \cont\opt(t)) = 0 \quad \text{for a.e.\ } t \in [t_{k-1}\opt, t_k\opt] \text{ and } k = 1,\ldots,\nu,
\end{equation}
and for each \(k = 1,\ldots,\nu-1\),
\begin{equation}
	\label{e:Ham continuity}
	H^{\eta}(\fullcostate(t_k\opt), t_k\opt+, \optfullstate(t_k\opt), \cont\opt(t_k\opt+)) - H^{\eta}(\fullcostate(t_k\opt), t_k\opt-, \optfullstate(t_k\opt), \cont\opt(t_k\opt-)) = 0.
\end{equation}

\begin{remark}[Intermediate time instants]
	Note that \eqref{e:Ham constancy} implies \eqref{e:Ham continuity}. In ``hybrid'' optimal control problems where the switching times between two dynamical modes are free, \eqref{e:Ham continuity} becomes essential for the computation of the optimal switching time and is then known as the \emph{switching condition}; see e.g., \cite[Page 459]{ref:Cla-13}. Similarly in our case, the intermediate time instants \(\{t_k\}\) are free, and \eqref{e:Ham continuity} is necessary for the computation of the optimal intermediate time instants \(\{t_k\opt\}\).
\end{remark}

This completes the proof of our main result.

\section{Numerical Experiments}
\label{s:simulations}

\subsection{A new algorithm}

Theorem \ref{t:OCP} provides first order necessary conditions for optimality for \eqref{e:OCP}. In spirit, therefore, Theorem \ref{t:OCP} is similar to the classical Euler's necessary conditions for optimality (that states that the gradient of a smooth function defined on an open set must vanish at an extremum point). Numerical algorithms are thereafter needed to arrive at optimal solutions starting from the necessary conditions given by the PMP in Theorem \ref{t:OCP}.\footnote{To wit, the process of arriving at the optimizer is indirect and consequently, this method is said to be an indirect method as opposed to a direct methods (finite dimensional minimization after suitable discretization).} The process starts by applying the PMP to distill a \textit{two-point boundary value problem} (TPBVP) from \eqref{e:OCP} (see e.g., \cite{ref:Bet-98, ref:Rao-10}), following which a suitable efficient algorithm is employed to solve this TPBVP.

To be more precise, let us consider the vectors:
\begin{equation}
	\begin{aligned}
		\begin{cases}
			\gt \Let \big(\gt_0,\ldots,\gt_{\nu}\bigr) \in \R^{\nu+1},\\
			\gs \Let \big(\gs_0(\gt_0),\ldots,\gs_{\nu}(\gt_{\nu})\bigr) \in (\R^{d+1})^{\nu+1}, \\
			\ga \in \R^{q+1}, \quad \gb \in \R^{m+\nu}, \quad \gta \in \R^{m+\nu}, \\
			\gz \Let \bigl(\gt, \gs, \ga, \gb, \gta\bigr) \in \R^{\nu+1}\times(\R^{d+1})^{\nu+1}\times\R^{q+1}\times\R^{m+\nu}\times\R^{m+\nu},
		\end{cases}
	\end{aligned}
\end{equation}
where \(\gt\) is the vector of the intermediate time instants, \(\gs\) is a vector of the intermediate state vectors, \(\ga, \gb\) correspond to the multipliers \(\alpha, \beta\) defined in Theorem \ref{t:OCP}, \(\gta\) is another multiplier introduced to simplify the numerical computation by converting the inequality constraints to equality constraints, and \(\gz\) is a vector consisting of all of the aforementioned parameters. Note that \(\gz\) is the unknown vector of parameters that determines the optimal trajectories of the \eqref{e:OCP} given by Theorem \ref{t:OCP}, and therefore, solving the TPBVP consists of finding the optimal parameter \(\gz = \dst\opt\).

Let \(t\mapsto \bigl(\fullstate(t, \gz), \fullcostate(t, \gz)\bigr)\) be a Caratheodory solution of the state and adjoint dynamics given by the TPBVP in Theorem \ref{t:OCP} corresponding to some parameter \(\gz\). Consider a vector of intermediate points \(\intpt^0\) corresponding to \(\gz\), and the function
\[
	\defun: \R^{\nu+1}\times(\R^{d+1})^{\nu+1}\times\R^{q+1}\times\R^{m+\nu}\times\R^{m+\nu} \ra (\R^{d+1})^\nu\times\R^{d+2}\times\R^{q+1}\times\R^{m+\nu}\times\R^{m+\nu}\times\R^\nu
\]
defined by 
\begin{equation}
	\label{e:shooting function}
	\defun(\gz) = \pmat{
		\fullstate(\gt_1, \gz) - \gs(\gt_1) \\ 
		\vdots \\
		\fullstate(\gt_{\nu}, \gz) - \gs(\gt_{\nu}) \\
		\spcostate(\gt_{\nu}, \gz) + \gb_{m+\nu}(2\spstate(\gt_{\nu}) - \lspcnst{\nu} -\rspcnst{\nu})\\
		\costate(\gt_{\nu}, \gz)  + \eta\ell_{\state(\tfin)}(\intpt^0) + \big[\frac{\partial \eqc(\intpt^0)}{\partial \state(\tfin)}\big]^\top\ga + \big[\frac{\partial \ineqc(\intpt^0)}{\partial \state(\tfin)}\big]^\top\gb, \\ 
		\rocostate(\gt_{\nu}, \gz)  + \eta\ell_{\tfin}(\intpt^0) + \inprod{\ga}{\frac{\partial \eqc(\intpt^0)}{\partial \tfin}} + \inprod{\gb}{\frac{\partial \ineqc(\intpt^0)}{\partial \tfin}}\\
		\eqcnst\,(\intpt^0)  \\
		\ineqcnst\,(\intpt^0)  \\
		\gb_1\ineqcnst1\\
		\vdots \\
		\gb_{m+\nu}\ineqcnst{m+\nu}\\
		H^{\eta}(\fullcostate, \gt_0, \fullstate, \cont)\\
		H^{\eta}(\fullcostate, \gt_1+, \fullstate, \cont) - H^{\eta}(\fullcostate, \gt_1-, \fullstate, \cont)\\
		\vdots \\
		H^{\eta}(\fullcostate, \gt_{\nu-1}+, \fullstate, \cont) - H^{\eta}(\fullcostate, \gt_{\nu-1}-, \fullstate, \cont)
	}
\end{equation}
where \(\eqcnst\,(\intpt^0) \Let \bigl(\eqcnst1(\intpt^0),\ldots, \eqcnst{q+1}(\intpt^0))\bigr)^\top\), and \( \ineqcnst\,(\intpt^0) \Let \bigl(\ineqcnst1(\intpt^0)+(\gta_1)^2,\ldots, \ineqcnst{m+\nu}(\intpt^0)+(\gta_{m+\nu})^2\bigr)^\top\). Then our TPBVP and the function \(\defun(\cdot)\) are related by the following elementary proposition that reduces the problem of finding a solution of the \ref{e:OCP} characterized by Theorem \ref{t:OCP}, i.e., finding a \(\dst\opt\), to obtaining a zero of the \emph{nonlinear and implicit function} \(\defun(\gz)\) given by \eqref{e:shooting function}. (We note that the number of equations in \(\defun\) are equal to the number of variables in \(\gz\); therefore, this root finding problem is well posed.)

\begin{proposition}
	\label{p:TPBVP}
	The TPBVP resulting from Theorem \ref{t:OCP} for \eqref{e:OCP} has a solution \(\dst\opt = \gz\) if and only if \(\gz\) is a zero of the function \(\defun(\gz)\) given by \eqref{e:shooting function}.
\end{proposition}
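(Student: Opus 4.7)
The plan is to prove the biconditional by a direct term-by-term matching between the rows of the shooting function $\defun(\gz)$ and the conditions of Theorem \ref{t:OCP}. First I would observe that the parameterization $t \mapsto (\fullstate(t,\gz),\fullcostate(t,\gz))$ is, by construction, a Carath\'eodory solution of the state and adjoint ordinary differential equations from \ref{t:OCP:adjeq} on each open sub-interval $(\gt_{k-1},\gt_k)$, with the pointwise Hamiltonian maximization \ref{t:OCP:MP} built in when generating the control at each time. Hence propagation of the dynamics and the maximum condition are automatically encoded by the parameterization; what remains to be checked is (i) the interior matching of state values at intermediate time instants, (ii) the jumps of the adjoint at these instants, (iii) the equality and inequality constraints on $\intpt$, (iv) complementary slackness, (v) the terminal transversality block, and (vi) the Hamiltonian constancy together with the intermediate-point Hamiltonian continuity.

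Next I would walk through the rows of \eqref{e:shooting function} and pair each with exactly one of (i)--(vi). The first $\nu$ block rows $\fullstate(\gt_k,\gz)-\gs(\gt_k)=0$ enforce that the forward propagation of the state lands on the declared intermediate state vector used to anchor the adjoint jumps and multiplier conditions at $\gt_k$; in particular, the jump conditions of \ref{t:OCP:tv} at the intermediate points are produced automatically when the adjoint trajectory is initialized on each subinterval from $\gs(\gt_k)$, $\ga$, and $\gb$ according to the prescribed formulas. The next three rows reproduce the terminal part of \ref{t:OCP:tv}. The rows $\eqcnst\,(\intpt^0)=0$ together with $\ineqcnst_i(\intpt^0)+\gta_i^2=0$ encode the equality and inequality constraints on the vector of intermediate points, with the squaring trick converting inequalities into equalities while automatically certifying $\ineqcnst_i\leq 0$. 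The rows $\gb_i\,\ineqcnst_i=0$ are precisely the complementary slackness conditions \ref{t:OCP:cs}. Finally, the last block rows encode the Hamiltonian constancy \ref{t:OCP:hmc} (fixed to zero at the initial instant, and then constant on each piece as a standard consequence of the adjoint equations coupled with the Hamiltonian maximization) together with the intermediate-point continuity \ref{t:OCP:intpt}.

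For the converse direction, given a TPBVP solution $\optadmps$ with multipliers $\alpha,\beta$ as in Theorem \ref{t:OCP}, one sets $\gt_k$ to the optimal intermediate times, $\gs_k(\gt_k)=\optfullstate(\gt_k\opt)$, $\ga=\alpha$, $\gb=\beta$, and $\gta_i=\sqrt{-\ineqcnst_i(\optintpt)}$, whereupon every block of \eqref{e:shooting function} vanishes by direct substitution. The part requiring genuine care, rather than routine checking, is bookkeeping: one must confirm that nonnegativity \ref{t:OCP:nn} of $\gb$ is not captured inside $\defun$ and is instead maintained as a side constraint on the solver, and that the non-triviality condition \ref{t:OCP:nt} is handled by the standard a priori normalization $\eta\in\{0,1\}$. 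With these stipulations in place, Proposition \ref{p:TPBVP} follows as a direct reading of \eqref{e:shooting function} against the clauses of Theorem \ref{t:OCP}.
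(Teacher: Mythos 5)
Your row-by-row matching of the components of \(\defun\) against the clauses of Theorem \ref{t:OCP} is exactly the argument the paper intends: the paper states Proposition \ref{p:TPBVP} as an ``elementary'' consequence of the construction of \eqref{e:shooting function} and supplies no further proof, so your proposal is correct and takes essentially the same route. Your closing caveat --- that nonnegativity \ref{t:OCP:nn} and non-triviality \ref{t:OCP:nt} are not encoded in \(\defun\) and must be imposed as side conditions --- is a genuine and worthwhile observation that the paper itself glosses over.
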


The algorithms typically employed in computing a zero of a nonlinear map such as \(\defun\) above are based on the Newton-Raphson (NR) iterative scheme and continuation methods \cite{ref:Hesse-08, ref:Emn-17}. Recall that the NR iterates starts with the intention of finding a zero of the first order approximation of \(\defun\) near a zero \(\zeta\) of \(\defun\), i.e., from the affine map \(z'\mapsto \defun(z) + \defun'(z)(z' - z)\) for \(z, z'\) sufficiently close to \(\zeta\), leading to the recursion \(z_{k+1} = z_k - \defun'(z_k)\inverse\defun(z_k)\) for \(k = 0, 1, \ldots\). Under standard hypotheses the sequence \((z_k)_{k\in\N}\) of iterates converges to \(\zeta\). The effectivness of this Newton-Raphson scheme is highly dependent on
\begin{itemize}[label=\(\circ\)]
	\item the map \(\defun\) being sufficiently smooth,
	\item the availability of a good initial guess of the joint state-adjoint variables at one of the boundary points of the interval,
	\item the need for the derivative of \(\defun\) to be invertible everywhere sufficiently close to a zero of \(\defun\)), and
	\item the accuracy of the numerical computation of the derivative of \(\defun\) via finite difference schemes.
\end{itemize}
The lack, in general, of a large enough region of convergence is a well-known issue with the NR scheme. Moreover, since \(\defun\) is nonlinear and is implicitly defined in our setting, its smoothness and in turn its differentiability are difficult to ascertain a priori, resulting in difficulties with verifying the hypotheses of the NR scheme. Consequently, the need for the development of a new algorithm that is ``derivative-free'' is acute. However, if the NR scheme does converge, then it converges quadratically, which is a highly desirable property.

We propose a new recursive algorithm that combines the \emph{stochastic approximation} (SA) algorithm and the NR scheme in order to find a zero of \(\defun\).\footnote{For a detailed discussion and background of the SA algorithm we refer the reader to the standard sources \cite{ref:Borkar-08, ref:KusYin-03}.} Recall that the SA algorithm starts with the \textit{stochastic recurrence} 
\begin{equation}
	\label{e:storec}
	\dst_{k+1} = \dst_k + \stepsz_k\bigl( \defun(\dst_k) + \martsq_{k+1} \bigr),\quad \dst_0 \text{ (given)},\quad k\in\N,
\end{equation}
where \(k\) represents iteration step, \(\dst_k\) is the iterate value at the \(k^{\text{th}}\) step, \(\stepsz_k\) is a positive step size at \(k\), \(\defun(\dst_k)\) is the value of the function \(\defun\) evaluated at \(\dst_k\), and \((\martsq_k)_{k \in \N}\) is a sequence of independent and identically distributed random vectors with zero mean and bounded variance drawn from some underlying probability distribution.\footnote{This sequence \((\martsq_k)_{k\in\N}\) may be a martingale difference sequence, in general, but we did not need to employ this additional level of generality in our numerical experiments reported in this article.} Under mild hypotheses on \(\defun\), the sequence \((\dst_k)_{k \in \N}\) defined by the recursion \eqref{e:storec} asymptotically converges (in a certain precise sense) to a zero of the function \(\defun(\cdot)\) provided the sequence \((\stepsz_k)_{k\in\N}\) satisfies\footnote{This was pointed out by Robbins and Monro in \cite{ref:RobMon-51}.}
\begin{equation}
	\label{e:step condition}
	\sum_{k\in\N} \stepsz_k = +\infty \quad\text{and}\quad \sum_{k\in\N} \stepsz^2_k  < +\infty.
\end{equation}
The SA algorithm relies on the ability to evaluate \(\defun\) at given points, and
\begin{itemize}[label=\(\circ\)]
	\item neither needs the analytical expression of \(\defun\) nor computes its derivative numerically via finite difference,
	\item is consequently \emph{derivative-free}, and hence can be used even when resulting equations are non-differentiable, and
	\item does not need the availability of a good initial guess --- in fact, it explores the space on which \(\defun\) is defined due to the artificial injection of the noise, and asymptotically converges to a zero (if one exists) with probability one.
\end{itemize}
Of course, the rate of decay of the sequence \((\stepsz_k)_{k\in\N}\) (that satisfies \eqref{e:step condition}) directly affects the rate of convergence of the sequence \((\dst_k)_{k\in\N}\) despite its convergence being \emph{almost sure}.

We propose a novel ``hybrid'' algorithm here that combines some of the best features of the SA algorithm, namely, the exploration of space to find a zero, the ability to progress without derivative computations, etc., with the best features of the NR scheme, namely, a fast (quadratic) rate of convergence:
\begin{enumerate}[label=(\Roman*), leftmargin=*, align=right, widest=II]
	\item We first employ the SA algorithm to converge sufficiently close to a zero of \(\defun\); this is ensured by the difference between several successive steps of the recursions being bounded above by a sufficiently small threshold preassigned by the designer. The employment of the SA algorithm in this first step serves as an exploratory purpose as the SA algorithm finds a suitable neighborhood of a zero of \(\defun\) to settle down to, which provides a warm start for the next step.
	\item We switch to the NR scheme (or a variant thereof) with the final iterate of the SA algorithm being the initial condition of the NR iterations. The idea is that since the NR scheme typically suffers from small regions of convergence, the SA takes care of the hunt for suitable initial guesses. Since the NR iterates must converge quadratically, it becomes clear by observing very few of its iterates whether these iterates show signs of convergence. By ``signs of convergence'' we mean whether the \emph{scalars} \(\norm{\Phi(z_k)}\) and \(\norm{z_k - z_{k-1}}\) decrease on an average over several successive iterates \(k\). Moreover, higher order finite difference schemes are employed to compute the gradient matrix \(\G\) of the function \(z\mapsto \defun(z)\) since an explicit expression of \(\defun\) is not available, as is standard in shooting methods for optimal control.
		\begin{itemize}[label=\(\circ\)]
			\item If these iterates indeed converge, we continue with the NR iteration to obtain a zero.
			\item Otherwise, we simply revert back to the SA algorithm in I) above and continue with the iterations with a smaller threshold of error, and repeat until convergence.
		\end{itemize}
\end{enumerate}

A detailed theoretical treatment of this hybrid algorithm will be presented elsewhere. We mention here that the numerical examples presented below were found to range from difficult to challenging for conventional techniques based on shooting and homotopy due to the considerable sensitivity to initial guesses and the presence of discontinuities in the adjoint trajectories in our problems having intermediate constraints. However, our hybrid algorithm succeeded where others did \emph{not} in each of the numerical experiments given below.


\begin{algorithm}  
	\SetAlgoLined
	\caption{Augmented Stochastic Approximation Algorithm}
	 \label{a:Algo}  
	\textbf{initialization:} \label{a:Init2}Choose \(\dst_1\), as an initial guess, an error tolerance bound \(\epsilon > 0\), and a radius of convergence \(r > 0\) of Newton's iterates.  Set \(k = 1\), and compute \(\defun(\dst_1).\)\,       

	\While{\(\norm{\defun(\dst_k)} > r\)}{
		\begin{enumerate}[label=(\roman*), leftmargin=*, widest=b, align=left]
			\item \label{a:Algo:mart}     \( \stepsz_k\) and \(\martsq_{k+1}\)
			\item \label{a:Algo:iter}  \(\dst_{k+1} = \dst_k + \stepsz_k\bigl(\defun(\dst_k) + M_{k+1}\bigr)\)
			\item \label{a:Algo:efun}  \(\defun(\dst_{k+1})\)
			\item \label{a:Algo:count}  \( k = k + 1\)
		\end{enumerate}
	}
	\textbf{return} \(\dst_k\) and set \(\dst_1 = \dst_k\) and \(m = 1\)\\
	\While{\(\norm{\defun(\dst_m)} > \epsilon\)}{
		\begin{enumerate}[label=(\alph*), leftmargin=*, widest=b, align=left]
			\item    \(G = \partial_{\dst}\defun(\dst_m)\)
			\item   \(\dst_{m+1} = \dst_m - \alpha_m G\inverse \defun(\dst_m)\)
			\item   \(\defun(\dst_{m+1})\)
			\item   \( m = m + 1\)
		\end{enumerate}
		}
	\textbf{return} \(\dst_m\), and set \(\dst\opt = \dst_m\).           
\end{algorithm}
                    
\subsection{Numerical experiments}
       
   \begin{example} \label{Example Harmonic Oscillator}
	   Let us design a sparse controller for a linear harmonic oscillator plant described by the pair \((A, B) = \left( \pmat{0 & 1 & \\ -1 & 0 }, \pmat{ 0 \\ 1}\right)\). Let \(T > 0\) and \(\nu = 1\) (due to which \(\tfin = T\)), and consider the optimal control problem
       \begin{equation}
	      \label{e:s1} \tag{S1}
	      \begin{aligned}
			& \minimize_{\cont} && \J(\cont) =	 \int_{0}^{T} \indic(\cont(t))\,\dd t  \\
			&\sbjto			 && \begin{cases}
			                 \dot{\fullstate}(t) = A\fullstate(t) + B\cont(t) \quad \text{for a.e. } t \in [0, T], \\
			                 \fullstate(t) \in \R^2, \quad \cont(t) \in \admcont = [-1,1],\\
			                 T = 15,\\
			                 \fullstate(\tinit) = (4, -3)^{T}, \quad \fullstate(\tfin) = (0, 0)^{T}.
			\end{cases}
		\end{aligned}
		\end{equation}
         \noindent For our numerical experiment, the convergence tolerance is kept at \(\epsilon = 10^{-3}\), while the condition for switching from SA to NR is \(\norm{\defun(\gz_k)} = 0.1\), i.e., \(r =0.1\), in Algorithm \ref{a:Algo}.
        
	   The necessary conditions for an optimal control described by Theorem \ref{t:OCP} and Algorithm \ref{a:Algo} along with the idea of Proposition \ref{p:TPBVP} are applied for numerical computation of the solution trajectories of this system; the corresponding results are demonstrated in Figures \ref{fig:fig09} through \ref{fig:fig12}. Figure \ref{fig:fig09} plots the convergence of \(\norm{\defun(\gz_k)}\) w.r.t.\ the iteration number \(k\) for three different sequences \((\gamma_k)\) of the step size:
           \[
           \gamma_k = \frac{10^{-2}}{ k^{\frac{4}{7}}},\quad \gamma_k = \frac{10^{-2}}{1 + 0.05k}, \quad \text{and  } \gamma_k = \frac{10^{-2}}{1 + k^{\frac{1}{5}}}.        
           \]
		   It is clear from Figure \ref{fig:fig09} that the last sequence gives the fastest convergence among the three: the number of iterations required to converge to \(10^{-1}, 10^{-2}, \&  10^{-3}\) are approximately \( 350, 500, \& 4000\) in comparison to \(7000, 10000, \& 14000\), and \(3500, 9000, \& 35000+\), respectively for the other two sequences. The average CPU time required per iteration of SA algorithm is around \(16.5\) milli-seconds and is not dependent on the sequence \((\gamma_k)\).

    \begin{remark}[Necessity for switching] As illustrated in Figures \ref{fig:fig09} and \ref{fig:fig10} of Example \ref{Example Harmonic Oscillator}, the SA algorithm alone has the ability to reach sufficiently close to a zero of \(\norm{\defun(\cdot)}\). However, when the iterates reach close to a zero, then the future values of the noise \(M_n\) in the Algorithm \ref{a:Algo} become comparable to that of \(\defun(\gz_n)\), which results in sustained oscillations that hinder further convergence of the SA. This phenomenon can be seen in Figure \ref{fig:fig09} where all three trajectories exhibit oscillatory behavior around \(\norm{\defun(\cdot)} = 0.1\). To mitigate this difficulty with convergence, we switch to the NR method at such a stage since it gives quick quadratic convergence (depicted in Figure \ref{fig:fig10}) when the initial value lies within its region of convergence. Apart from this, there are many sequences, e.g., the sequence \(\gamma_k = \frac{10^{-2}}{1 + 0.05k}\), which take a long time to converge to a low value \(\norm{\defun(\cdot)} = 0.001\), but yield sufficiently faster rate of convergence at the initial stage. This feature provides ample justification for the switching in our algorithm. \end{remark}
   
		\textup{Figure \ref{fig:fig10} depicts the convergence of the sequence \(\bigl(\norm{\defun(\gz_k)}\bigr)_k\) obtained by application of the Augmented Stochastic Approximation Algorithm \ref{a:Algo}. The SA performs well and gives smooth convergence initially. The NR-based shooting takes around $174$ seconds and $2200$ iterations to converge to  \(\norm{\defun(\cdot)} = 0.001 \text{ from } 0.1\). The maximum time taken by the Algorithm \ref{a:Algo} to converge to \(\norm{\defun(\cdot)} = 0.001\) is \(232\) seconds, and is much less than the maximum time required for the same convergence using only the SA algorithm which is more than 580 seconds for this particular example.}
    
    \begin{remark} Note that the sequence \(\gamma_k = \frac{10^{-2}}{1 + k^{1/5}}\) performs well in this example, but its square sum is not bounded, i.e., it does not satisfy second part of the \eqref{e:step condition}. While convergence is not theoretically guaranteed, its transient performance has been empirically found to be sufficiently good in these difficult problems to merit further investigation.
    \end{remark}
    
    \textup{ In Figure \ref{fig:fig11} we plot the optimal control trajectory of the harmonic oscillator; it exhibits \emph{bang-off-bang} profile and is periodic in nature with time period \(TP =2\pi\) seconds. Also, the controller is set to zero for most part of its activation period meaning the sparsity is playing its role. Figure \ref{fig:fig12} shows the time evolution of the optimal state trajectories; sharpe changes in trajectory \(x_2\opt\) can be seen when ever the control \(\cont\opt(t)\) switches. Finally, the trajectories reach origin at \(t = 14 s\) and stays there afterwards.}
   \end{example}

        \begin{figure}[h!]
     \centering
             \begin{minipage}{0.48\textwidth}
             \centering
                   \captionsetup{width=0.9\linewidth}
                    \includegraphics[width=1.1\linewidth]{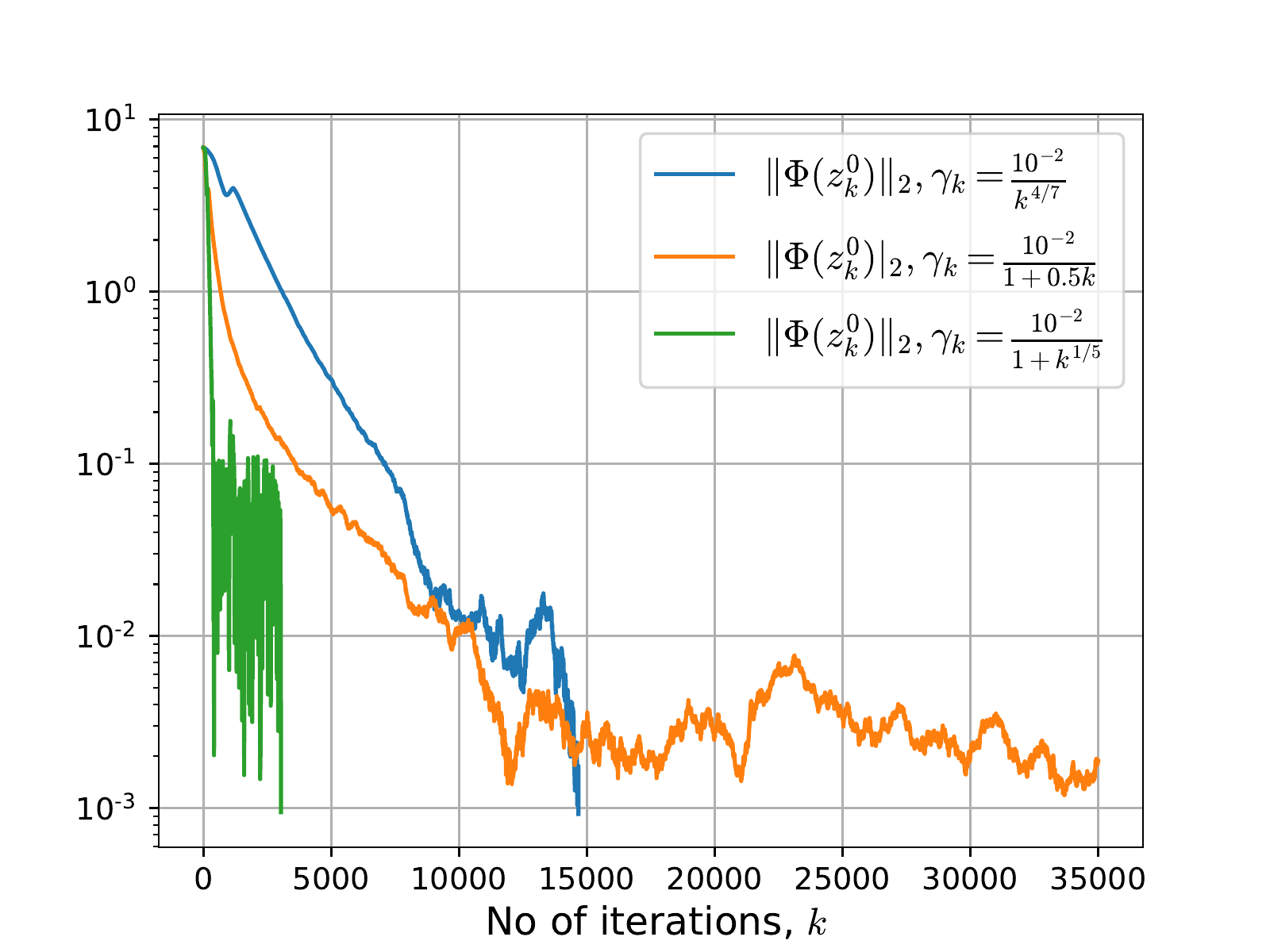}
                    \vspace{-20pt}
                    \caption{Illustration of the convergence of the norm \(\norm{\defun(\gz_k)}\) with respect to the number of iterations \(k\), obtained using only SA part of the Algorithm \ref{a:Algo}, for three different form of the step size \(\gamma_k\). }
                    \label{fig:fig09}
            \end{minipage}
            \begin{minipage}{0.48\textwidth}
            \centering
                     \captionsetup{width=0.9\linewidth}
                     \includegraphics[width=1.1\linewidth]{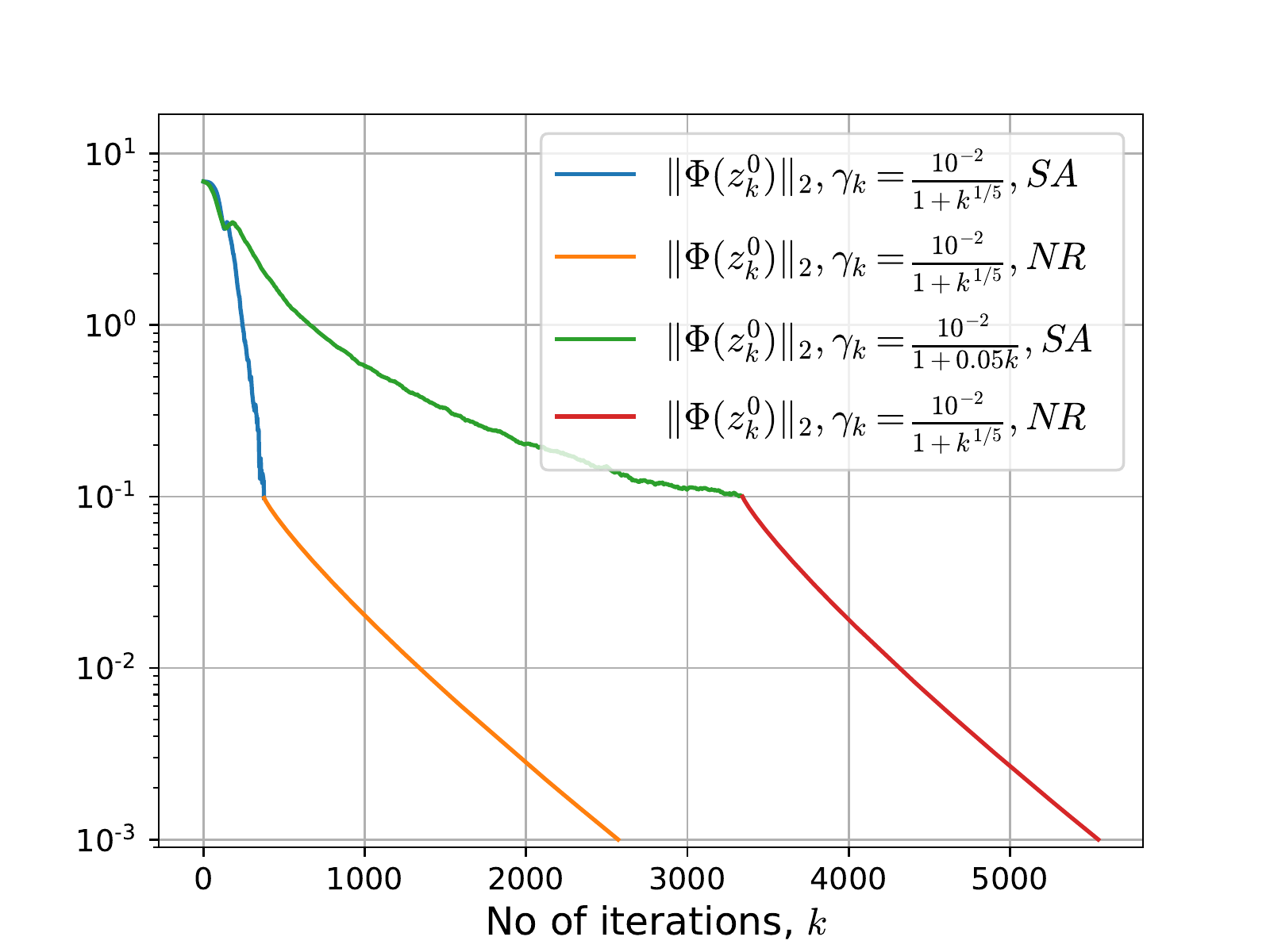}
                       \vspace{-20pt}\caption{We plot the convergence of the norm \(\norm{\defun(\gz_k)}\) w.r.t the number of iterations \(k\), obtained from the Augmented Algorithm \ref{a:Algo}; Overall time taken for convergence to \(0.001\) is $180$ seconds for the faster one and $232$ seconds for the slower one.}                             
                     \label{fig:fig10}
            \end{minipage}
       \end{figure}
       
        \begin{figure}[h!]
     \centering
             \begin{minipage}{0.48\textwidth}
             \centering
                   \captionsetup{width=0.9\linewidth}
                    \includegraphics[width=1.1\linewidth]{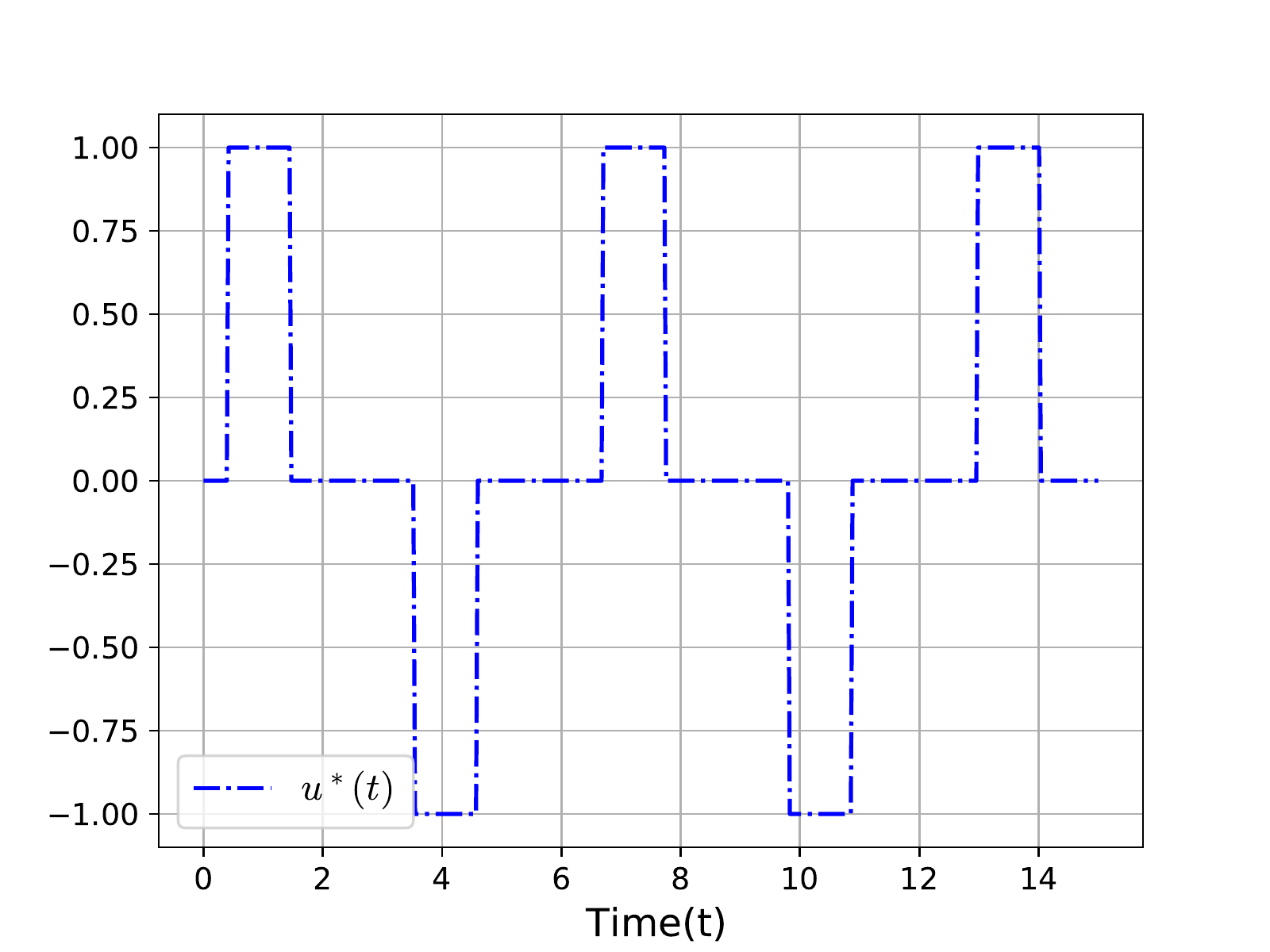}
                    \vspace{-20pt}
                    \caption{Plots the time evolution of the optimal control trajectory \(\cont\opt(t)\); the trajectory is periodic in nature with time period of \(2\pi\) seconds, is very time sparse, and has \emph{bang-off-bang} profile.} 
                    \label{fig:fig11}
            \end{minipage}
            \begin{minipage}{0.48\textwidth}
            \centering
                     \captionsetup{width=0.9\linewidth}
                     \includegraphics[width=1.1\linewidth]{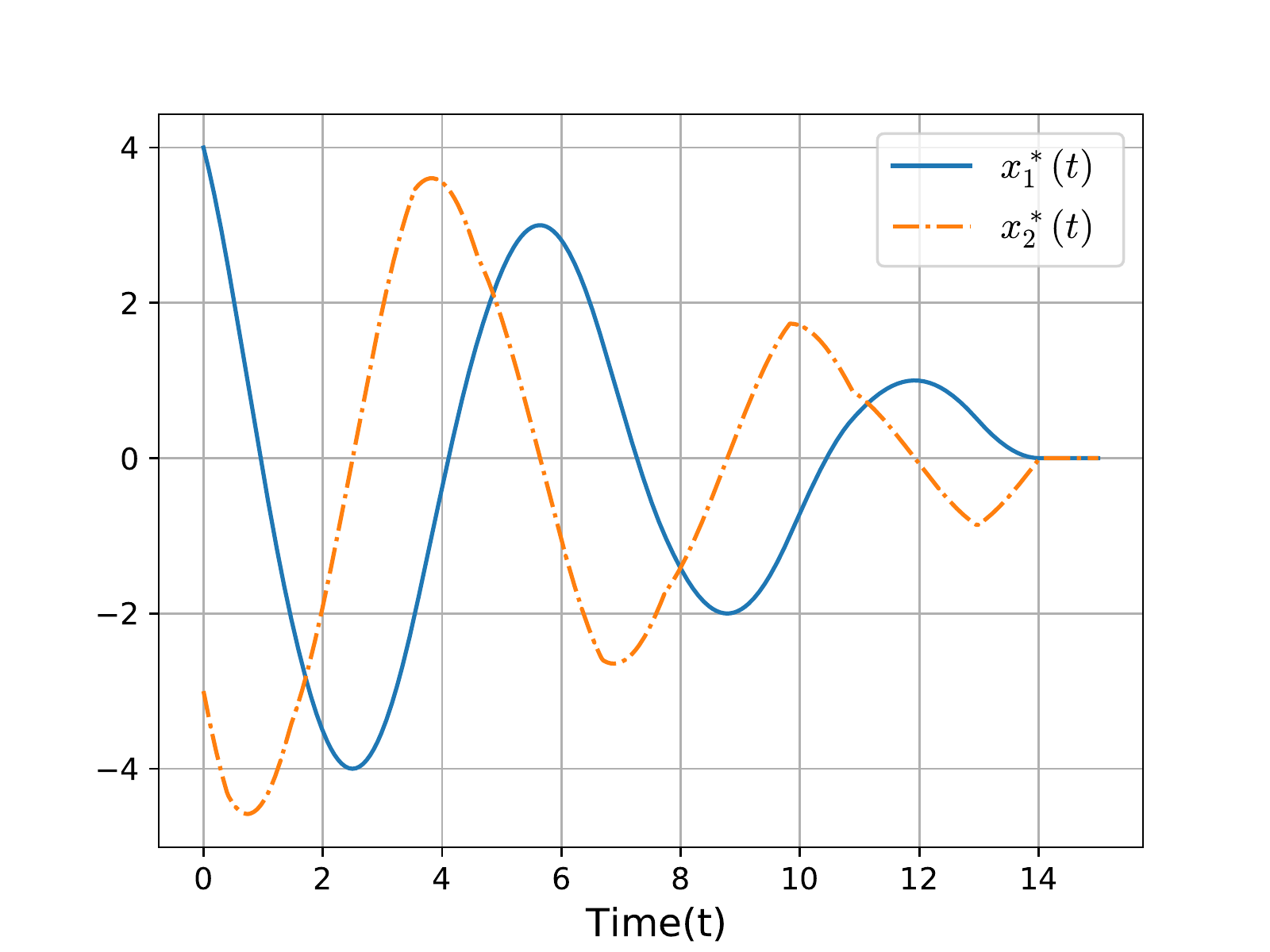}
                       \vspace{-20pt}
                      \caption{Shows the time evolution of the optimal state trajectories; sharpe changes in trajectory \(x_2\opt(\cdot)\) can be seen when ever the control \(\cont\opt(t)\) switches.}
                     \label{fig:fig12}
            \end{minipage}
       \end{figure}

\begin{example}   
	   In this example we illustrate the intermediate constraints problem on the evolution of opinions on a 3-regular graph with number of edges (agent) \(d = 24\) and number of influence channel \(r = 2\). Let \(x_i(t)\) denote the opinion of the \(i\)th agent, \(a_{ij}\) denote the magnitude of effect of opinion of \(j\)th agent on the opinion of \(i\)th agent, \(\cont_k\) denote the \(k\)th influence channel, and \(b_{ik}\) denote the influence of utilization of channel \(k\) on opinion of agent \(i\). In these terms the opinion dynamics of agent \(i\) is given by 
	\[
		\dot x_i(t) \Let \underset{j\in\{1,\ldots,24\}}\sum a_{ij}(x_j(t) - x_i(t)) + \underset{k\in\{1,2\}}\sum b_{ik}\cont_k(t)
	\]
	where \(a_{ij} \geq 0\). By stacking the dynamics of all the agents we arrive at a linear dynamical system. The exact optimization problem is given by:
	\begin{equation}
		\label{s:eg2} \tag{S2}
		\begin{aligned}
			\minimize_{\cont}  & && \J(\fullstate,\cont,\intpt) = \int_{0}^{T} \indic(\cont(t))\,\dd t - \inprod{p}{x(T)}  \\
			\sbjto				& && \begin{cases}
			\dot{\fullstate}(t) = -(D-A)\fullstate(t) + B\cont(t) \quad \text{for a.e. } t \in [\tinit, \tfin = T], \\		      
			\fullstate(t) \in \R^d, \quad \cont(t) \in [-1,1]\times[-1,1],\\
			t_0 = 0s,\quad t_1 = 2s,\quad T= 4s,
		\end{cases}
		\end{aligned}
	\end{equation}   
	where $A$ is the adjacency matrix and $D$ is the degree matrix of the graph, $B$ is the channel influence matrix, $p$ is a weight vector and the intermediate states \(x(t_0), x(t_1)\) are fixed while the terminal state \(x(T)\) is kept free, see \cite{ref:EPS-17} for further discussion on such problems. 

	\textup{The optimal control described by Theorem \ref{t:OCP} and Algorithm \ref{a:Algo} along with the idea of Proposition \ref{p:TPBVP} are applied to the problem \eqref{s:eg2} and corresponding results are shown in Figures \ref{fig:fig06} through \ref{fig:fig08}. Figure \ref{fig:fig06} plots the evolution of opinions: each trajectory starts from some randomly chosen opinions between \(]-1,1[\) at the initial time \(t_0 = \SI{0}{s}\), and reach exactly the three different specified levels (0.6, 0.65, 0.75) of the opinions (depending on the influence matrix $B$) at the intermediate time \(t_1\). Optimality requirements further push the opinions to higher values at the terminal time \(T\). Sharp changes in the solution trajectories can be seen whenever the corresponding controllers \(u^*_1\) or \(u^*_2\) switch.}

	\textup{In Figure \ref{fig:fig07} we depict the optimal control trajectories \( u^*_1\) and \(u^*_2\): both trajectories show \textit{bang-off-bang} nature in accordance with \cite{ref:ChaNagQueRao-16} and each of them remains zero for some part of their activation time; this shows that the sparsity is playing its role. Also on the first interval, \(u^*_1\) is active for smaller time span than \(u^*_2\); this happens because the corresponding opinions are required to reach a lower level at time \(t_1\), and has negative influence on the opinion of others. Figure \ref{fig:fig08} shows the evolution of the norm of all the states: Initially the norm decreases for some time showing a non-minimum phase behaviour and then increases with varying rate till around \(t = \SI{1.6}{s}\). On the interval \( t \in [1.6, 1.8] \) it decreases slightly because both the controllers are switched off, following which it increases again until around \( t = \SI{3.3}{s}\) and finally reaches close to 5.1 units.} 
\end{example}

    \begin{figure}[h]
     \centering
             \begin{minipage}{0.48\textwidth}
             \centering
                    \captionsetup{width=0.9\linewidth}
                    \includegraphics[width=1.1\linewidth]{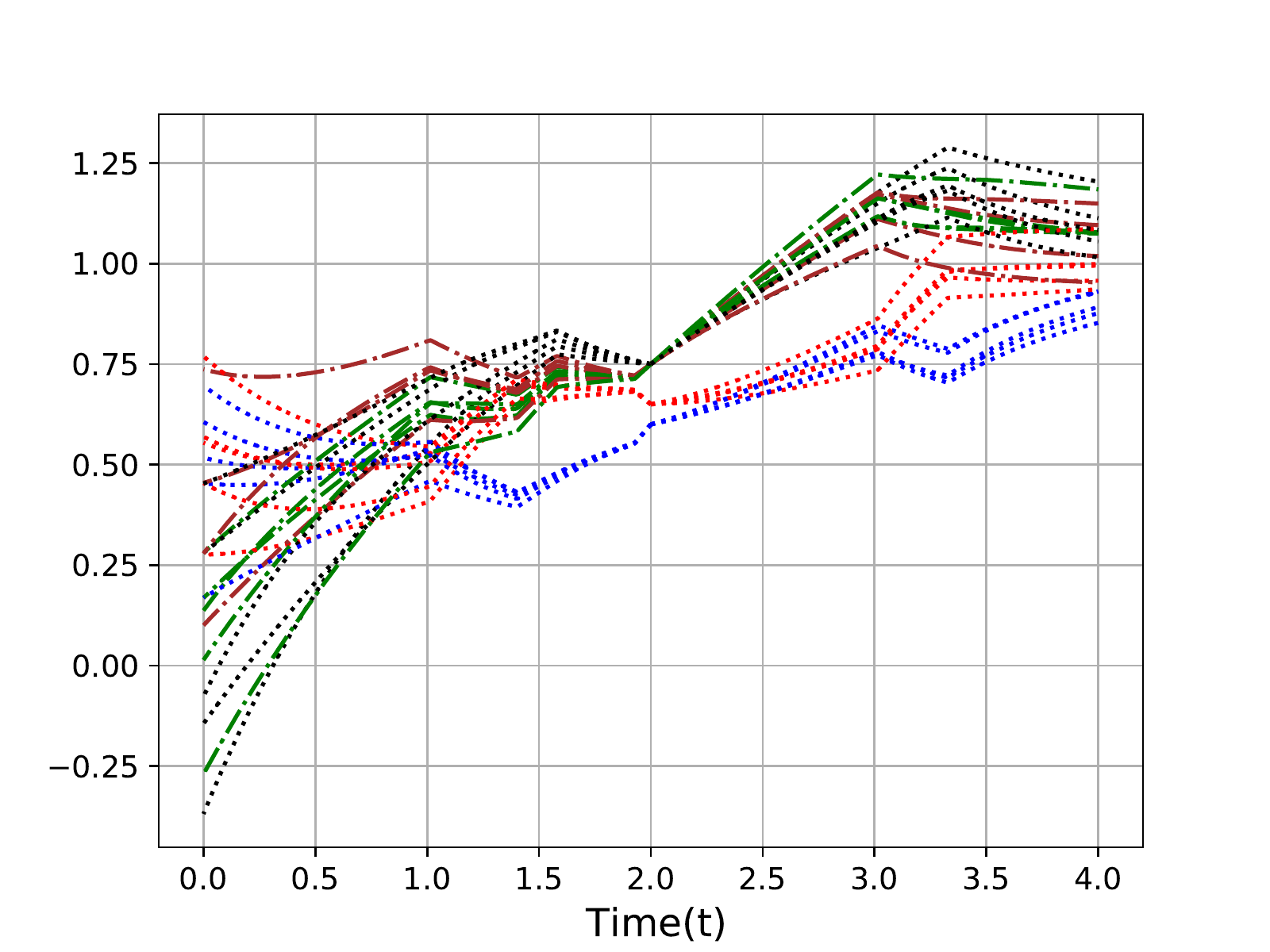}
                      \vspace{-20pt}
                     \caption{The time evolution of the opinions of the agents under the influence of the two control channels \(\cont_1\opt\) and \(\cont_2\opt\) are shown here.}
                    \label{fig:fig06}
            \end{minipage}
            \begin{minipage}{0.48\textwidth}
            \centering
                     \captionsetup{width=0.9\linewidth}
                     \includegraphics[width=1.1\linewidth]{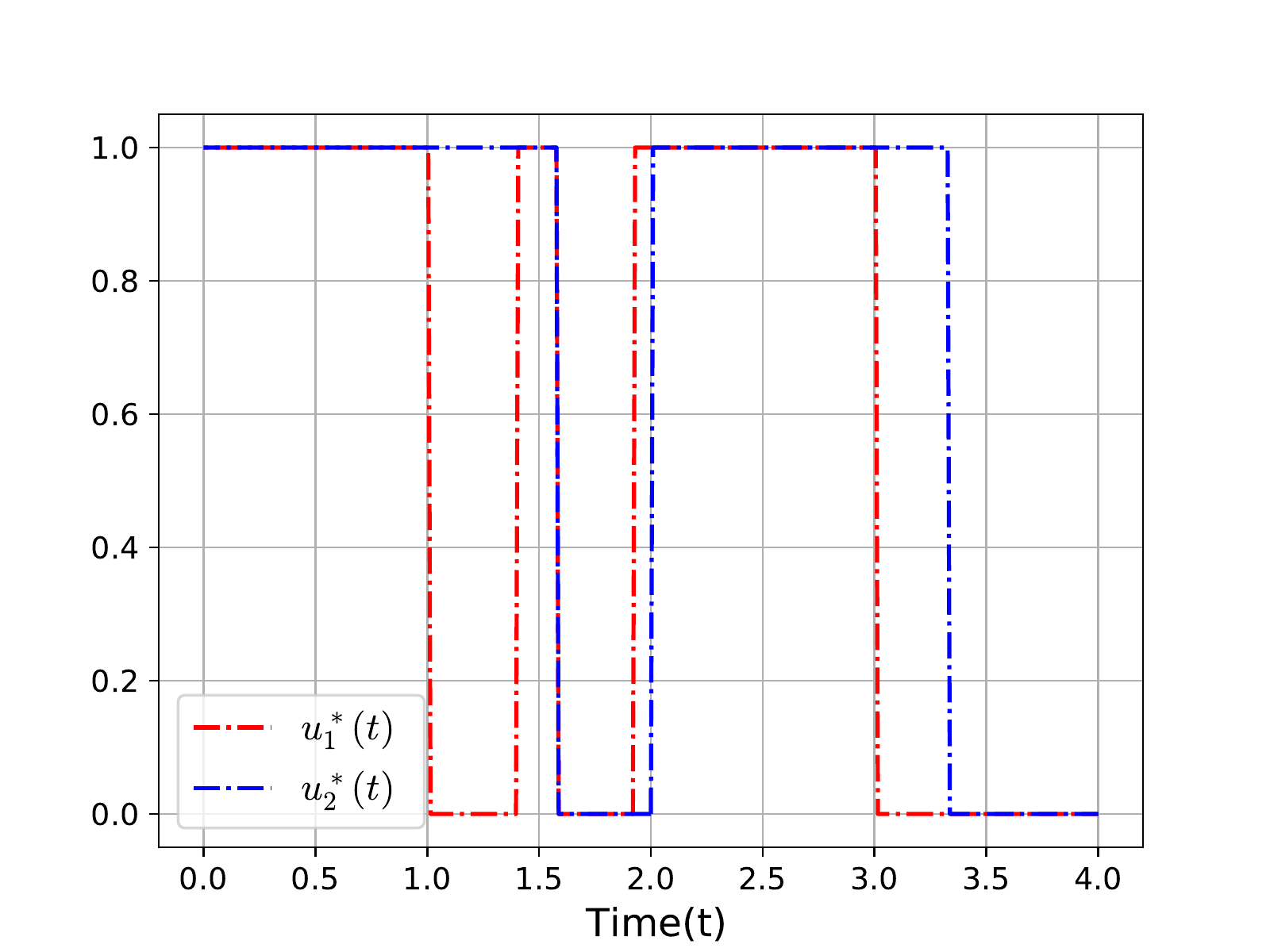}
                     \vspace{-20pt}
                    \caption{We depict the optimal budget allocations to the influence channels \(\cont_1\opt\) and \(\cont_2\opt\) under the influence channel matrix \(B\).}
                     \label{fig:fig07}
            \end{minipage}
       \end{figure}
       
       \begin{figure}[h]
     \centering
             \begin{minipage}{0.48\textwidth}
             \centering
                    \captionsetup{width=0.9\linewidth}
                    \includegraphics[width=1.1\linewidth]{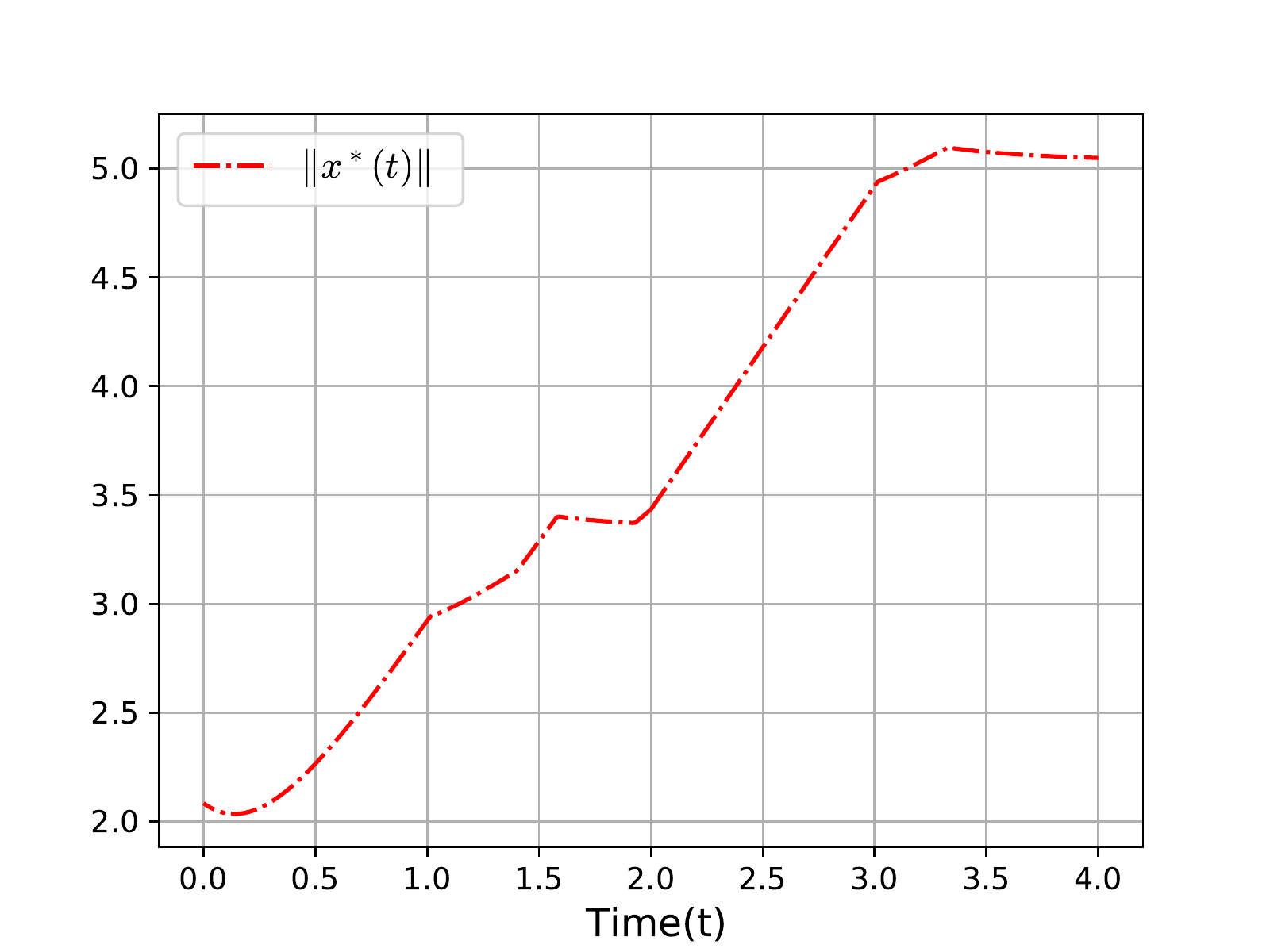}
                      \vspace{-20pt} 
                     \caption{The optimal time trajectory of the norm of the opinions of the agents are shown here; the figure shows non minimum phase behaviour e.g., around time 0s, 1.6s, and after time 3.4s.}
                    \label{fig:fig08}
            \end{minipage}
            \begin{minipage}{0.48\textwidth}
            \centering
                     \captionsetup{width=0.9\linewidth}
                      \includegraphics[width=1.1\linewidth]{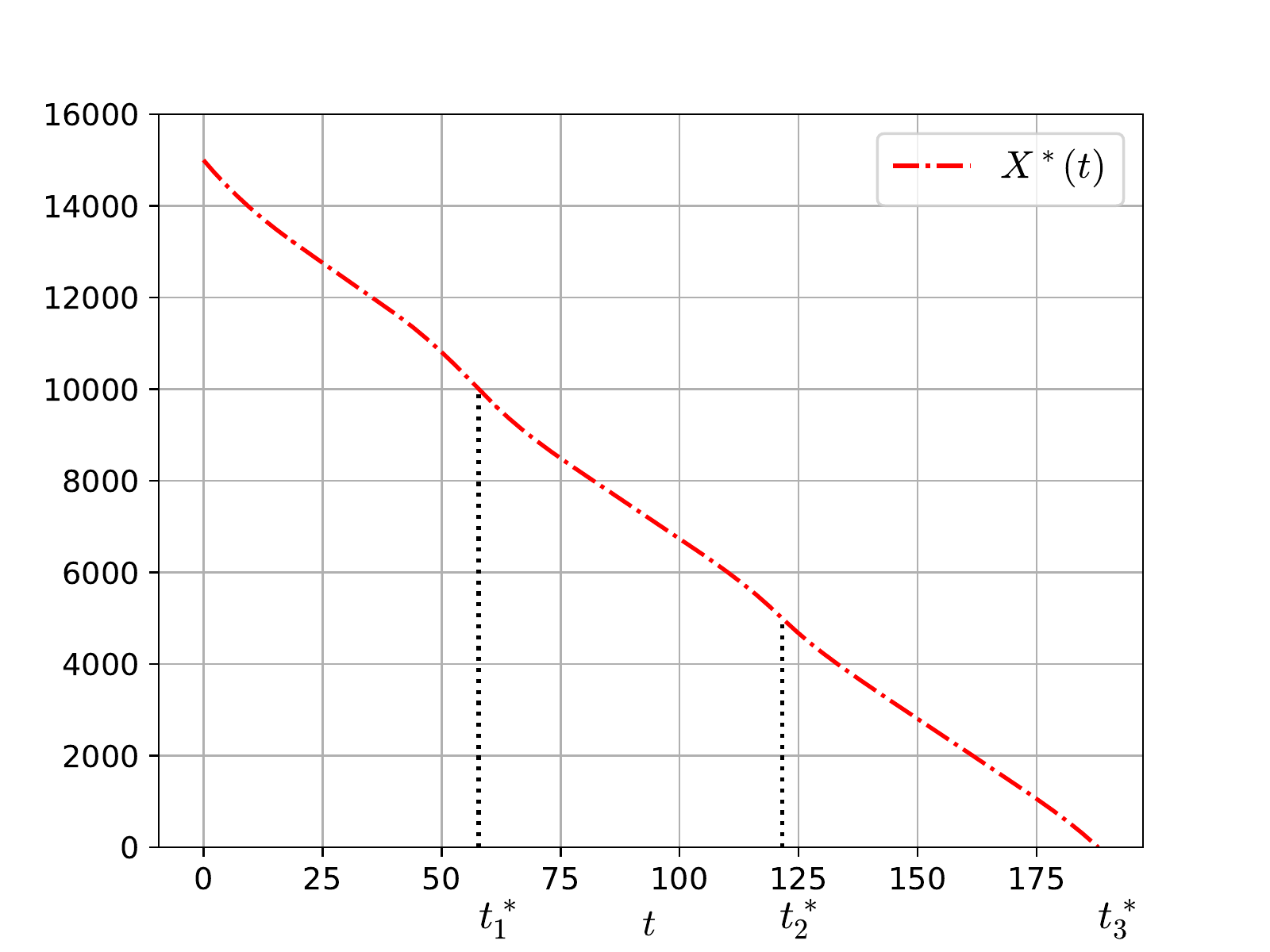}
                    \vspace{-20pt}
                   \caption{We plot the aircraft landing distance \(X^*(\cdot)\) from the runway, and \(t_1\opt, t_2\opt, t_3\opt\) mark the optimal intermediate time instants.}
                     \label{fig:fig01}
            \end{minipage}
       \end{figure}

 \begin{figure}[h!]
     \centering
             \begin{minipage}{0.48\textwidth}
             \centering
                   \captionsetup{width=0.9\linewidth}
                    \includegraphics[width=1.1\linewidth]{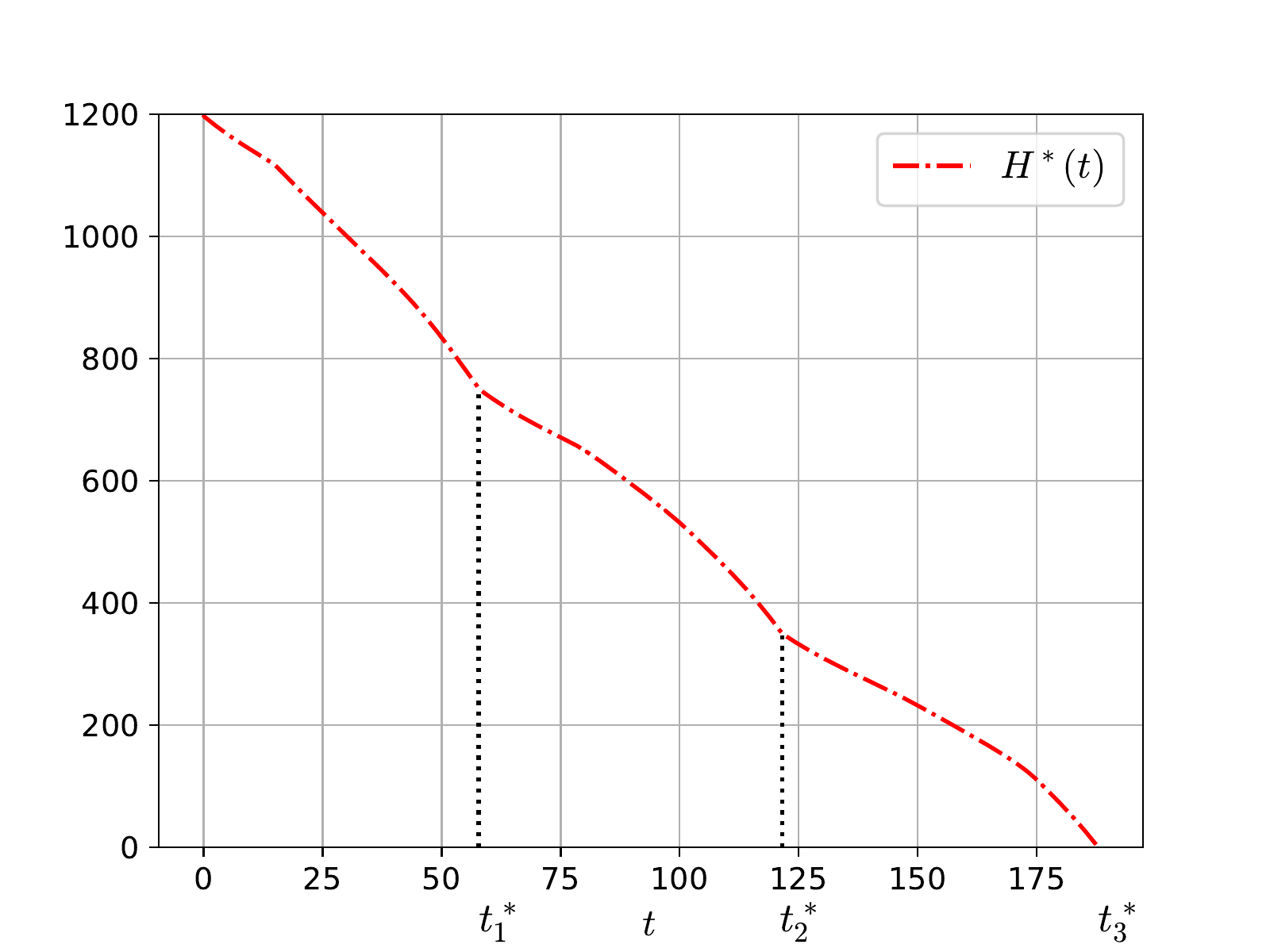}
                       \vspace{-20pt}
                      \caption{Illustraition of the optimal time evolution of the aircraft altitude \(H^*(\cdot)\), it can be seen that the intermediate constraints are active at \(t_1\opt, t_2\opt\),and \(t_3\opt\).} 
                    \label{fig:fig02}
            \end{minipage}
            \begin{minipage}{0.48\textwidth}
            \centering
                     \captionsetup{width=0.9\linewidth}
                      \includegraphics[width=1.1\linewidth]{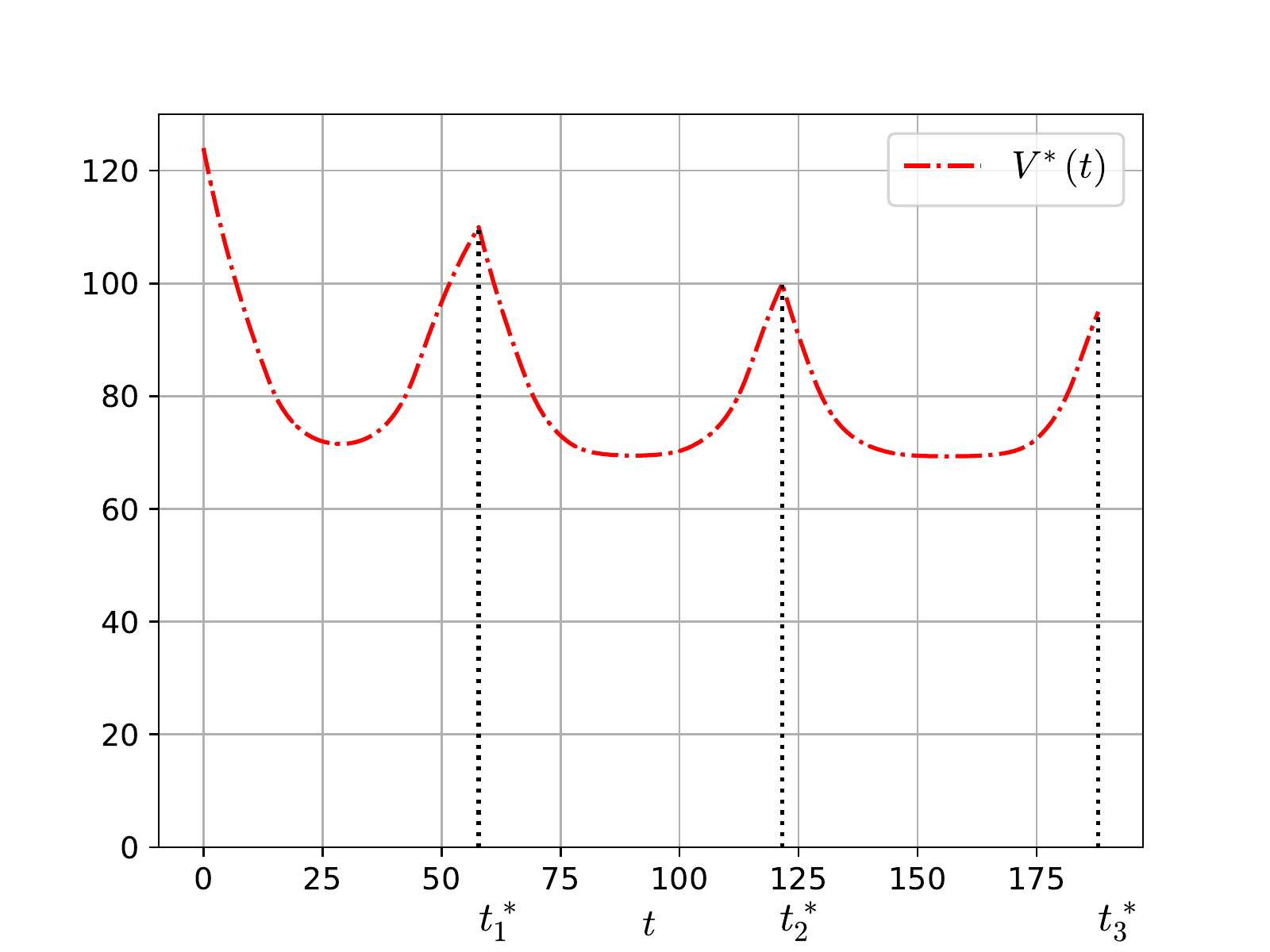}
                     \vspace{-20pt}
                    \caption{The optimal velocity profile \(V^*(\cdot)\) of the aircraft along with the active intermediate velocity constraints are shown here.}
                     \label{fig:fig03}
            \end{minipage}
       \end{figure}
           
       \begin{figure}[h]
     \centering
             \begin{minipage}{0.48\textwidth}
             \centering
                     \captionsetup{width=0.9\linewidth}
                     \includegraphics[width=1.1\linewidth]{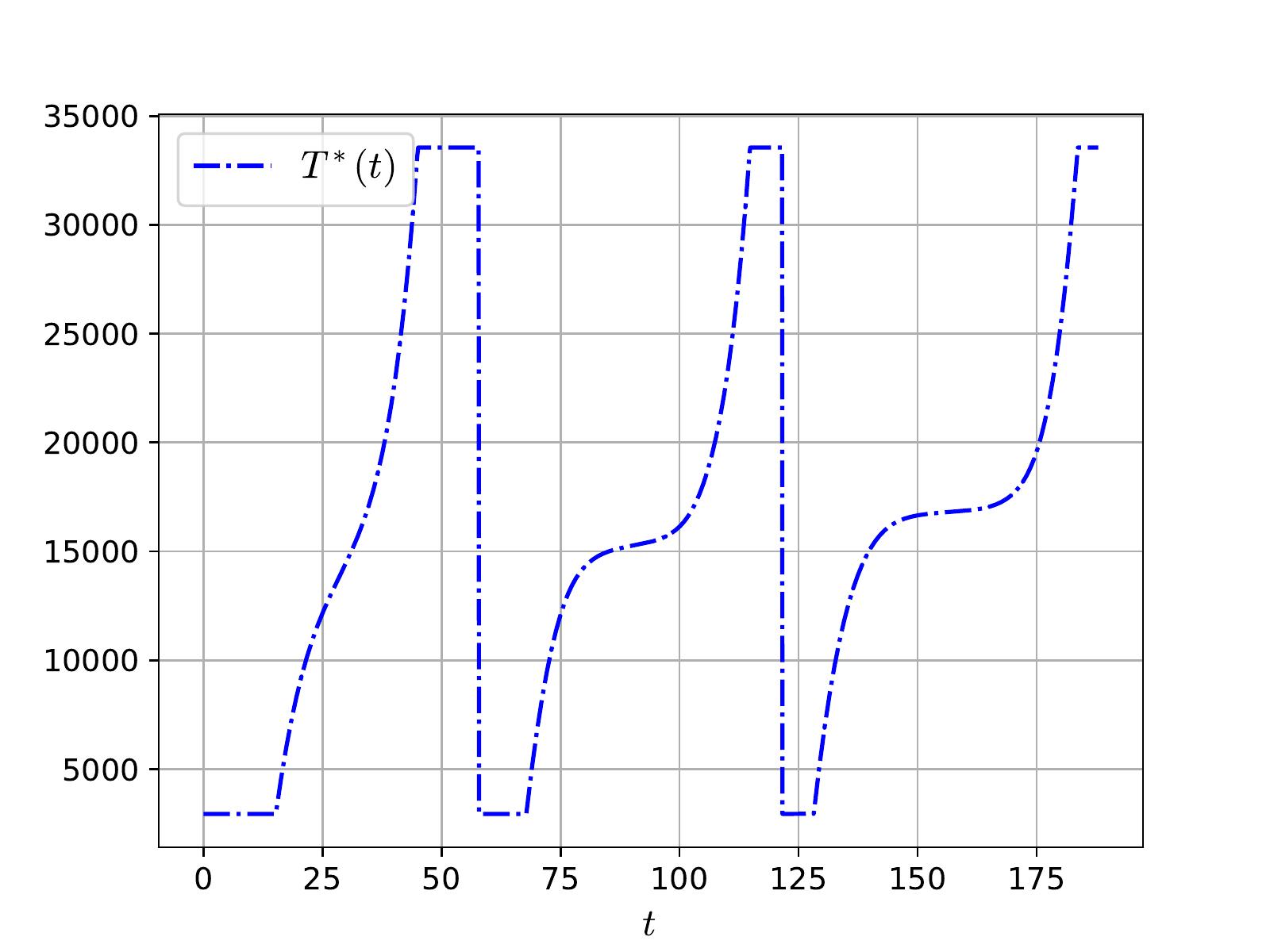}
                      \vspace{-20pt}
                     \caption{We depict the optimal thrust trajectory \(T\opt(\cdot)\) of the aircraft where the bounds on the thrust are seen to be active.}
                    \label{fig:fig04}
            \end{minipage}
            \begin{minipage}{0.48\textwidth}
            \centering
                     \captionsetup{width=0.9\linewidth}
                     \includegraphics[width=1.1\linewidth]{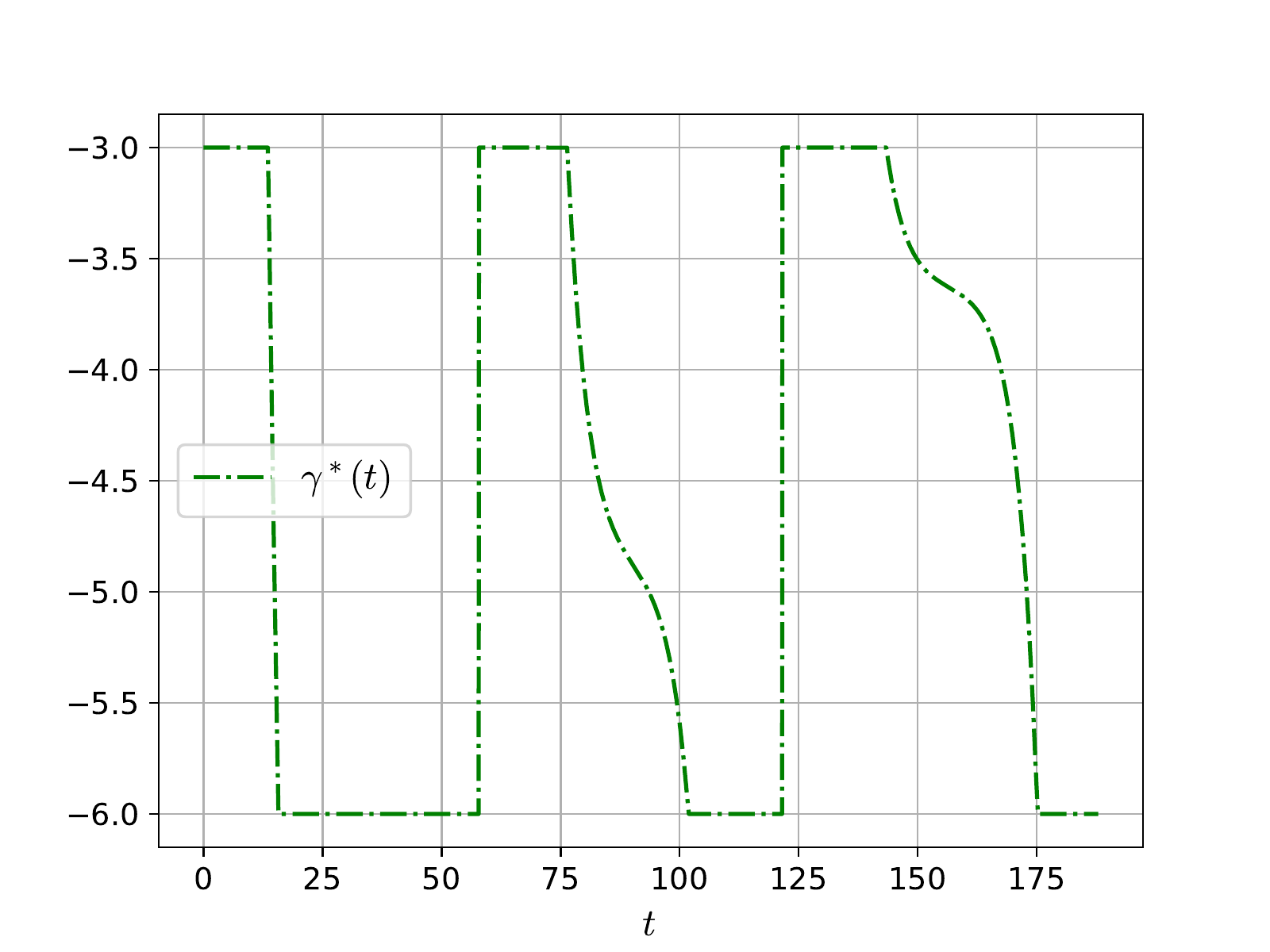}
                     \vspace{-20pt}
                    \caption{The optimal flight path angle trajectory \(\gamma\opt\) w.r.t. time, and with active control bounds \(-6\degree \leq \gamma\opt \leq -3\degree\) are observed here. }                         
                     \label{fig:fig05}
            \end{minipage}
       \end{figure}
    
\begin{example}
        Here we illustrate our results on an aircraft landing-approach problem. Consider an aircraft approaching a runway for landing and let the aircraft be at some distance \(X_0\) from the runway at a given height \(H_0\) at time \(t_0 = 0\) second.  For smooth landing, the aircraft is commanded to hit particular frames at several intermediate distances from the runway. This results in an optimal control problem of minimizing a weighted sum of fuel and energy with intermediate constraints featuring constraints on the states of the aircraft at these intermediate distances. We restrict our attention to the movement of the aircraft on the vertical plane, and describe its motion using a point mass model for simplicity. Further, we confine our investigation to a case where the flight path angle dynamics is negligible, see \cite{ref:Pie-85} for a discussion. Variants of this models with rectangular constraints are commonly used in air traffic control (ATC) research; see e.g., \cite{ref:Lygero-99}. The equations of motion of the aircraft are:
        \begin{equation} \label{s:eg1}
        \begin{cases}
          \begin{aligned}
              &       \frac{\dd V(t)}{\dd t} =  \frac{T(t)}{m} - g\sin(\gamma(t)) - \frac{1}{2m}\ro V(t)^2S(C_{D_0} + \eta C_L^2),\\
              &       \frac{\dd H(t)}{\dd t}    = V(t)\sin(\gamma(t)),\\
              &        \frac{\dd X(t)}{\dd t}    = V(t)\cos(\gamma(t)),\\
          \end{aligned}
          \end{cases}
        \end{equation} 
		where we denote the landing approach range by $X$, the aircraft speed by $V$, the flight path angle by $\gamma$, the altitude by $H$, and the thrust by $T$. Here \((X,V,H)\) are the state variables and \((T, \gamma)\) are the control inputs. Observe that the control variable \(\gamma\) enters the dynamics in a highly nonlinear fashion in \eqref{s:eg1}.
          
		   For the system \eqref{s:eg1} we consider minimizing a weighted sum of fuel and energy, which in mathematical terms is an \emph{\(L_2\)-regularized \(L_1\)-minimization} problem over the control, i.e., we minimize
\begin{equation}
	\J(\cont) \Let \lambda_1\norm{\cont}_1 + \frac{r}{2}\norm{\cont}_2^2,
\end{equation}
where \(\lambda_1, r > 0\) are positive weights, and \( t \mapsto \cont(t) \Let \pmat{T(t) \\ \gamma(t)}\) are the control inputs.
           For simulations we consider the data of the F-104G fighter aircraft from \cite{ref:Pie-85}, where $W= mg =$ \SI{7180}{\newton} is the weight of the aircraft, $g =$ \SI{9.81}{\meter\per\second\squared} is the acceleration due to gravity, $\rho =$ \SI{1.22625}{\kilogram\per\meter\cubed} is the air density, and $S = $\SI{18.2}{\meter\squared} is the aircraft planform area. The dimension-free aerodynamic constants $C_{D_0} = 0.198681$ and $\eta = 0.114738$. The bounds on the control actions are \( 300g \leq T \leq 3420g\) and \SI{-3}{\degree} \(\leq \gamma \leq\) \SI{-6}{\degree}. The initial time \(t_0 =\) \SI{0}{\second} is fixed, and the intermediate time instants \(t_1, t_2,\) and \(t_3\) are free. The state constraints at these time instants are \((X(t_0), H(t_0), V(t_0)) =\) (\SI{15}{\km}, \SI{1197}{\meter}, \SI{124}{\meter\per\second}), \(( X(t_1), H(t_1), V(t_1))=  (\SI{10}{\km}, \SI{750}{\meter} ,\SI{110}{\meter\per\second}),\) \( (X(t_2), H(t_2), V(t_2)) = (\SI{5}{\km}, \SI{350}{\meter} ,\SI{100}{\meter\per\second})\),  \((H(t_3), V(t_3),  X(t_3))\) \(= (\SI{0}{\km}, \SI{0}{\meter} ,\SI{90}{\meter\per\second}).\) The landing distance is measured in \SI{}{\km}, height in \SI{}{\meter} and velocity in meter-per-second \SI{}{\meter\per\second}.

          \textup{The optimal control described by Theorem \ref{t:OCP} and Algorithm \ref{a:Algo} along with the idea of Proposition \ref{p:TPBVP} are applied in simulation to the system \eqref{s:eg1} and the corresponding results are shown in Figures \ref{fig:fig01} through \ref{fig:fig05}. Figure \ref{fig:fig01} plots the evolution of landing distance from the runway: since the flight path angle is very small, the value of the landing distance \(X(t)\) largely depends on the velocity profile shown in Figure \ref{fig:fig03}. Also, the rate of approach towards runway is high whenever velocity is large. The optimal intermediate time instants are \(t_1\opt = \SI{57.8}{\s}, t_2\opt = \SI{121.5}{\s}, t_3\opt = \SI{188}{s}\), and the corresponding intermediate state constraints are active at these instants. Figure \ref{fig:fig02} shows the evolution of the height: the rate of descent largely depends on the velocity and the flight path angle and this rate is large whenever the control input \(\gamma\) is set to its lower limit. The minimum descent rate is \SI{4.23}{\meter\per\second} while the maximum is \SI{11.5}{\meter\per\second}, which is well within the tolerable range for a typical fighter aircraft. As mentioned above, in Figure \ref{fig:fig03} we plot the velocity of the aircraft; on each interval between successive intermediate constraints the optimality requirements set the thrust to its lower limit initially, resulting in a decrease in the aircraft velocity, but the terminal constraints on the velocity on each interval pulls the thrust towards its higher limit, resulting in an increase in its velocity. The time averaged velocity on the first interval \([\SI{0}{\s}, \SI{57.8}{\s}]\) is high and this quantity decreases on subsequent intervals, resulting in a higher time averaged rate of descent on the first interval.}

            \textup{Figure \ref{fig:fig04} plots the evolution of the thrust: the thrust profile depends mainly on the lift requirement, optimality criterion, and the velocity constraints. On each interval between successive intermediate constraints, the thrust initially stays at its lower limit in order to have minimum cost, but due to the intermediate constraints on the velocity, the thrust gradually increases after some time and reaches its maximum limit. Further, the overall thrust requirement decreases on subsequent intervals because of relaxation in the constraints on the velocity. In Figure \ref{fig:fig05} we plot the flight path angle: the evolution of the angle is mainly influenced by the descent and optimality requirements, and its magnitude follows similar trends as that of the thrust, i.e., on each interval, optimality forces it to its lower limit initially, while the intermediate constraints on the height pull it towards its higher limit. Finally, the overall requirement decreases on subsequent intervals (i.e., time duration for which \(\gamma\) is set to \(\SI{-3}{\degree}\) increases while the time duration of \(\SI{-6}{\degree}\) decreases on subsequent interval) because the level of descent required \((\SI{447}{\m}, \SI{400}{\m}, \SI{350}{\m})\) decreases progressively.}
\end{example}


	\bibliographystyle{alpha}
	\bibliography{ref}

\bigskip
\bigskip
\end{document}